\documentclass[reqno,12pt]{preprint}
\usepackage[latin1]{inputenc}
\usepackage[marginparwidth=1in]{geometry}
\usepackage[full]{textcomp}
\usepackage[osf]{newtxtext}
\usepackage{cabin}
\usepackage{enumerate}
\usepackage{enumitem}
\usepackage{bbm}
\usepackage{shuffle}
\usepackage[varqu,varl]{inconsolata}

\usepackage{comment}
\usepackage{hyperref}
\usepackage{breakurl}
\usepackage{mathrsfs}
\usepackage{booktabs}
\usepackage{caption}
\usepackage{stmaryrd}

\usepackage{tikz}
\usepackage{amsmath} 
\usepackage{mhequ} 
\usepackage{mhsymb} 
\usepackage{mhenvs} 
\usepackage{microtype}
\usepackage{amssymb} 
\usepackage{eufrak}
\usepackage{shuffle}

\usepackage{wasysym}
\usepackage{centernot}

\usepackage{nicefrac}

% Nice environment for quotes via "\enquote{XXX}"
\usepackage{csquotes}

% Package for \del (round delimiters), \sbr (square brackets), and \norm for a norm. Adjust the size of the respective delimiters by, say, \del[2], i.e. by adding a 0,1,2,3, or 4 in square brackets.
\usepackage{commath}

%\usepackage[notref,notcite]{showkeys}

%\newtheorem{notation*}{Notation}

%%%%%%%%%\mathbb

%\newcommand{\N}{\mathbb{N}}
%\newcommand{\R}{\mathbb{R}}

\newcommand{\D}{\mathbb{D}}

\newcommand{\1}{\mathbbm{1}}

%%%%%%%%%\mathbf

%%%%%%%%%%\mathcal
%

\newcommand{\cB}{\mathcal{B}}

\newcommand{\cH}{\mathcal{H}}

\newcommand{\cN}{\mathcal{N}}
\newcommand{\cP}{\mathcal{P}}

\newcommand{\cS}{\mathcal{S}}

\newcommand{\cW}{\mathcal{W}}

%%%%%%%%%\mathfrak

%%%%%%%%%\mathrm

      %%%%%% d

%\newcommand{\eval}{\mathrm{eval}}

%\newcommand{\mod}{\mathrm{mod}}

%%%%%%%%%%%%%%%%%bold

%%%%%%%%%\mathscr

%%%%%%%%%\mathsf

%%%%%%%%%%%%%%%% renew commands

\renewcommand{\R}{\mathbb{R}}

\renewcommand{\D}{\mathbb{D}}
\renewcommand{\N}{\mathbb{N}}

\renewcommand{\E}{\mathbb{E}}
\renewcommand{\P}{\mathbb{P}}

%%%%%%%%%%%%%%%%%%%% new commands

%%%%%%%%%%%%%%%%%%%% simplifying commands

\def\var#1{#1\textnormal{-var}}

%triple/vvvert norm stuff

% Math symbol font matha
\DeclareFontFamily{U}{matha}{\hyphenchar\font45}
\DeclareFontShape{U}{matha}{m}{n}{
	<5> <6> <7> <8> <9> <10> gen * matha
	<10.95> matha10 <12> <14.4> <17.28> <20.74> <24.88> matha12
}{}
\DeclareSymbolFont{matha}{U}{matha}{m}{n}
\DeclareFontSubstitution{U}{matha}{m}{n}
% Math symbol font mathb
\DeclareFontFamily{U}{mathx}{\hyphenchar\font45}
\DeclareFontShape{U}{mathx}{m}{n}{
	<5> <6> <7> <8> <9> <10>
	<10.95> <12> <14.4> <17.28> <20.74> <24.88>
	mathx10
}{}
\DeclareSymbolFont{mathx}{U}{mathx}{m}{n}
\DeclareFontSubstitution{U}{mathx}{m}{n}
% Symbol definition
\DeclareMathDelimiter{\vvvert}{0}{matha}{"7E}{mathx}{"17}
%%% \vvvert

%\newtheorem*{notation-no}{Notations}

\colorlet{darkblue}{blue!90!black}
\colorlet{darkred}{red!90!black}
\colorlet{dr}{red!90!black}

\tikzset{
	dot/.style={thin,circle,fill=gray,draw=black,inner sep=0pt,minimum size=2mm},
	vertex/.style={thin,circle,fill=black,draw=black,inner sep=0pt,minimum size=1mm},
}

\makeatletter % Stolen from the internet to make a fat \cdot which isn't as fat as a \bullet
\newcommand*{\bigcdot}{}% Check if undefined
\DeclareRobustCommand*{\bigcdot}{%
	\mathbin{\mathpalette\bigcdot@{}}%
}
\newcommand*{\bigcdot@scalefactor}{.5}
\newcommand*{\bigcdot@widthfactor}{1.15}
\newcommand*{\bigcdot@}[2]{%
	% #1: math style
	% #2: unused
	\sbox0{$#1\vcenter{}$}% math axis
	\sbox2{$#1\cdot\m@th$}%
	\hbox to \bigcdot@widthfactor\wd2{%
		\hfil
		\raise\ht0\hbox{%
			\scalebox{\bigcdot@scalefactor}{%
				\lower\ht0\hbox{$#1\bullet\m@th$}%
			}%
		}%
		\hfil
	}%
}
\makeatother

\def\dash{\leavevmode\unskip\kern0.18em--\penalty\exhyphenpenalty\kern0.18em}
\def\slash{\leavevmode\unskip\kern0.15em/\penalty\exhyphenpenalty\kern0.15em}

\def\w{\mathtt{w}}
\def\i{\mathtt{i}}
\def\j{\mathtt{j}}
\def\1{\mathtt{1}}
\def\2{\mathtt{2}}
\def\d{\mathtt{d}}
\def\var{\text{\tiny var}}

\begin{document}
	
\title{Gaussian Rough Paths Lifts \\ via Complementary Young Regularity}
\author{Paul Gassiat$^1$ and Tom Klose$^2$}

\institute{CEREMADE, Universit\'{e} Paris Dauphine, PSL University, France \and
	University of Oxford, United Kingdom\\
	\email{gassiat@ceremade.dauphine.fr, tom.klose@maths.ox.ac.uk}}

\maketitle

\begin{abstract}
	Inspired by recent advances in singular SPDE theory, we use the Poincaré inequality on Wiener space to show that controlled complementary Young regularity is sufficient to obtain Gaussian rough paths lifts. This allows us to completely bypass assumptions on the $2$D variation regularity of the covariance and, as a consequence, we obtain cleaner proofs of approximation statements (with optimal convergence rates) and show the convergence of random Fourier series in rough paths metrics under mi\-ni\-mal assumptions on the coefficients (which are sharper than those in the existent literature).
	
	\vspace{1em}
	
	\noindent{\it MSC2020:} Primary 60G15, 60L20; Secondary 42A32
	
	\noindent {\it Keywords:} Complementary Young regularity; Gaussian rough paths; Poincaré inequality; random Fourier series
\end{abstract}

\setcounter{tocdepth}{2}    
\tableofcontents

\section{Introduction}

\noindent
Since Lyons's original work~\cite{Lyons} in 1998, the study of rough paths has grown into a fully fledged theory with countless theoretical and practical applications, see the monographs~\cite{friz-victoir-book} and~\cite{FrizHairer20}. %the references therein.
Among all paths, \emph{Gaussian} paths form a particularly rich class of examples and the search for conditions under which they can be lifted to the space of rough paths has shaped the theory from its very beginning. 

In this article, we revisit this question from a new perspective that is based on the \emph{Poincaré inequality on Wiener space}.
This idea has originated in the more complicated context of singular~SPDEs, more precisely in the multi-index setting of Linares, Otto, Tempelmayr, and Tsatsoulis~\cite{LOTT24}, and has then allowed to obtain a more comprehensive proof of the BPHZ theorem within tree-based regularity structures, see Hairer and Steele~\cite{Hairer_Steele_BPHZ}.
For a worked example in the form of the generalised KPZ equation, see also the recent work of Bailleul and Bruned~\cite{Bailleul_Bruned_KPZ}. 

In our setting, the benefit of this approach is that it reduces the question of fin\-ding a Gaussian rough path lift to checking~\emph{controlled Complementary Young Regularity}~(cCYR) of the Cameron--Martin space; see Theorem~\ref{thm:cCYR1}, the main finding of this work.
This condition is arguably simpler to verify than conditions on the \mbox{$2$D $\rho$-variation} of the covariance associated to the underlying Gaussian process {that appear in the literature}; at the same time, it is only marginally stronger an assumption than CYR which, for most applications of Gaussian rough paths such as a support theorem~\cite[Sec.~15.8]{friz-victoir-book} or Malliavin differentiability~\cite[Sec.~11.3]{FrizHairer20}, is imposed anyway.

In applications, cCYR is readily implied by the Besov--variation embedding due to Friz and Victoir~\cite{FV06} and, in turn, allows us to obtain optimal convergence rates for piecewise linear approximations. When specialised to fractional Brownian motion with Hurst parameter~$H > 1/4$, this recovers the results of Friz and Riedel~\cite{FR14}. 

Finally, our result implies the convergence of random Fourier series in rough path distances under minimal and, in particular, less restrictive conditions than in the existent literature, see Friz, Gess, Gulisashvili, and Riedel~\cite{FGGR16}.
{As such, random Fourier series provide a concrete example in which our result leads to a stronger statement under weaker assumptions that are also easier to check in practice.}

\section{Preliminaries and main result} \label{sec_ccyr}

\noindent
In this section, we first introduce the necessary notation alongside some preliminary results which we then use to state and prove our main result.

\subsection{Notation and preliminary results} \label{sec_notation}

\noindent
For any~$l, n \in \N_0$ with~$l \leq n$, we set~$\llbracket l,n \rrbracket := \{l,\ldots,n\}$.
For~$\alpha,\beta \in \R$, we write~$\alpha \lesssim \beta$ if~$\alpha \leq C \beta$ for some~$C \in \R$ and~$\alpha \lesssim_{\theta} \beta$ if~$C = C(\theta)$; in estimates across multiple lines, this constant may change from line to line.
Whenever we write~$\alpha^-$, we mean that a statement holds for any~$\gamma < \alpha$, usually having in mind~$\gamma = \alpha - \kappa$ for~$0 < \kappa \ll 1$.

\paragraph*{Gaussian analysis.}
We follow~\cite[Sec.~11.1]{FrizHairer20} and consider a centred Gaussian process~$X = (X^\1, \ldots, X^\d): [0,T] \to \R^\d$ with continuous sample paths, which we realise as~$X(\omega) = \omega \in \Omega := C([0,T];\R^\d)$. 
We then equip~$\Omega$ with the uniform norm and define $\mathcal{F} := \cB(\Omega)$ as well as~$\P := \operatorname{Law}(X)$.
\begin{itemize}[wide, labelwidth=!, labelindent=0pt, parsep=3pt]
	\item The \emph{Cameron--Martin} (C--M) space~$\cH \subseteq \Omega \equiv C([0,T];\R^\d)$ of~$X$ is given by
	\begin{equation*}
		\cH := \{h \in C([0,T];\R^\d): \exists Z \in \cW_1 \ \text{s.t.} \ h_t = \E\sbr[0]{Z X_t} \ \text{for all} \ t \in [0,T]\}
	\end{equation*}
	where~$\cW_1$ is the \emph{first Wiener chaos}, i.e. the $L^2(\P)$-closure of $(X_t^\i: t \in [0,T], \ \i \in \llbracket \1, \d \rrbracket)$.
	\item 
	The map~$W: \cH \to \cW_1$ given by~$W(h) := Z$ for~$h_\cdot = \E\sbr[0]{Z X_\cdot}$ defines an~\emph{isonormal Gaussian process} on~$\cH$, i.e.~a family~$W = \{W(h)\}_{h \in \cH}$ of centered Gaussian random variables on the probability space~$(\Omega,\mathcal{F},\P)$ with covariance~$\E\sbr[0]{W(h)W(h')} =: \langle h,h' \rangle_{\cH}$.
	Equipped with this scalar product, the C--M space~$\cH$ becomes a separable Hilbert space.
	\item Let~$H_n$ denote the $n$-th Hermite polynomial. 
	The \emph{$n$-th homogeneous Wiener chaos}~$\cW_n$ is defined as the $L^2(\P)$-closure of~$\operatorname{span}\{H_n(W(h)): \norm[0]{h}_\cH = 1\}$; we also set~$\cP_n := \oplus_{k=0}^n \cW_k$, the \emph{$n$-the inhomogeneous chaos}.
	\item \emph{Hypercontractivity:} For~$n \in \N$, $p,q \in [1,\infty)$, and~$Y \in \cP_n$, we have the bound $\norm[0]{Y}_{L^q} \lesssim_{p,q,n} \norm[0]{Y}_{L^p}$; see~\cite[Thm.~5.10]{janson}.
	\item Let~$\cS$ be the set of all the cylindrical random variables
	\begin{equation*}
		F = g(W(h_1),\ldots,W(h_m)), \quad h_1, \ldots, h_m \in \cH, \quad g \in C_c^\infty(\R^m; \R), \quad m \geq 1.
	\end{equation*}
	The \emph{Malliavin derivative}~$DF \in L^2(\Omega;\cH)$ of~$F$ w.r.t. $W$ is defined as
	\begin{equation*}
		DF := \sum_{\ell=1}^m (\partial_\ell g)(W(h_1),\ldots,W(h_m)) h_\ell
	\end{equation*}
	\item \emph{Poincaré inequality on Wiener space:} Let~$\D^{1,2}$ be the closure of~$\cS$ under the norm~$\norm[0]{\cdot}_{1,2}$ given by~$\norm[0]{F}^{2}_{1,2} := \E[F^2] + \E\sbr[0]{\norm[0]{DF}_{\cH}^2}$.
	For any~$F \in \D^{1,2}$, we have the bound $\operatorname{Var}[F] \leq \E\sbr[0]{\norm[0]{DF}_{\cH}^2}$,
	see~\cite[Thm.~5.5.1,~Eq.~(5.5.2)]{Bogachev}.
	In particular, this implies
	\begin{equation*}
		\norm[0]{F}_{L^2(\P)} \leq \norm[1]{\norm[0]{DF}_{\cH}}_{L^2(\P)} + \abs[0]{E\sbr[0]{F}}.
	\end{equation*}
\end{itemize}
For further details on Gaussian analysis, we refer the reader to the monographs~\cite{janson} and~\cite{nualart}, or~\cite[App.~D]{friz-victoir-book} for a concise summary.

\paragraph*{Signatures and rough paths.}

\begin{itemize}[wide, labelwidth=!, labelindent=0pt]
	\item Let~$k \geq 1$. A \emph{word}~$\w = \i_1 \ldots \i_k$ with letters~$\i_\ell \in \llbracket \1,\d \rrbracket$ for~$\ell \in \llbracket 1,k \rrbracket$ has \emph{length}~$\abs{\w} := k$.
	For~$(e_k)_{k=\1}^\d$ the standard basis of~$\R^\d$, we write~$e_{\w} := \otimes_{\ell=1}^k e_{\i_\ell} \in (\R^\d)^{\otimes k}$.
	\item Let~$x: [0,T] \to \R^\d$ be a smooth path.
	We define the components of its \emph{truncated $n$-level signature} $S_{0,T}(x) \equiv S_{0,T}^{(n)}(x): [0,T] \to \oplus_{k=0}^n (\R^\d)^{\otimes k}$ by
	\begin{equation*}
		\langle S_{0,T}(x),\w \rangle 
		:=
		\langle S_{0,T}(x),e_\w \rangle
		:= 
		\int_{0 < u_1 < \ldots < u_k < T} \dif x^{\i_1}_{u_1} \ldots \dif x_{u_k}^{\i_k}, 
		\quad
		\w = \i_1 \ldots \i_k, \quad k \leq n.
	\end{equation*}	
	\item We define
	\begin{equation*}
		\threebars x \threebars_{\alpha,n} := \sup_{|\w| \leq n} \sup_{s<t \in [0,T]} \frac{|\left\langle S(x)_{s,t}, \w \right\rangle|}{|t-s|^{|\w| \alpha}}
	\end{equation*}
	and, by abuse of notation, also write~$\threebars S(x) \threebars_{\alpha,n} := \threebars x \threebars_{\alpha,n}$. This is the usual \emph{rough path norm} if $n = \lceil \alpha^{-1} \rceil$.
	\item For $0< \beta \leq \alpha$, we will use the following \emph{inhomogeneous distance}:
	\begin{equation*}
		\rho_{\beta,\alpha,n}[ x , y] := \sup_{|\w| \leq n} \sup_{s<t \in [0,T]} \frac{|\left\langle S(x)_{s,t} - S(y)_{s,t}  , \w \right\rangle|}{|t-s|^{\beta + \alpha(|\w|-1)}}.
	\end{equation*}
	By abuse of notation, we also write~$\rho_{\beta,\alpha,n}[S(x),S(y)] := \rho_{\beta,\alpha,n}[ x , y]$ which, for $\beta = \alpha$ and $n = \lceil \alpha^{-1} \rceil$, is the usual inhomogeneous rough path distance. In that case, we will simply write $\rho_{\alpha}$.
	The reason why we distinguish between~$\alpha$ and~$\beta$ is given in Remark~\ref{rmk_alphabeta}.
	\item We define the space~$\mathcal{C}^\alpha_g([0,T]; \R^\d)$ of \emph{geometric $\alpha$-Hölder rough paths} as the closure of $\{S^{\lceil \alpha^{-1} \rceil}(x): x \in C^\infty([0,T];\R^d)\}$ under the distance~$\rho_\alpha$.
	\item For~$t \in [0,T]$ and a word~$\w$ with~$\abs[0]{\w} \in \N$,  we have~$\langle S(x)_{0,t}, \w \rangle \in \cP_{\abs[0]{\w}}$ by~\cite[Prop.~15.19]{friz-victoir-book}
\end{itemize}

\noindent

We now collect several facts on the distance~$\rho_{\beta,\alpha,n}$. They are well-known when $\alpha = \beta$ (e.g. \cite{friz-victoir-book}) and the proofs for the general case are similar, so we do not give them here.

Recall that if $x$ is a $\alpha$-H\"older rough path, and $h$ is a path of finite $q$-variation with $1/q + \alpha > 1$, we can lift $(x,h)$ to a rough path in a canonical way (see~\cite[Condition~$20.1$]{friz-victoir-book} and the comments preceding it). 
We will need this fact in the following form.

\begin{proposition} \label{prop:YoungTr}
	Let $x:[0,T] \to \R^\d$ be a smooth path, then for any $q \geq 1$ with $1/q + \alpha > 1$, there exists $C=C(\alpha,p,n)$ s.t. for any $\i_1,\ldots, \i_n \in \llbracket \1,\d \rrbracket$, any $j =1,\ldots, n+1$, it holds that
	\begin{equation} \label{eq:YngTr}
		\int_{0 \leq r_{n+1} \leq \ldots \leq r_1 \leq T} \dif x^{\i_1}_{r_1} \ldots \dif h_{r_j} \ldots \dif x^{\i_n}_{r_{n+1}} \leq C \left( 1+ \threebars x \threebars_{\alpha,n}^{n}\right) T^{\alpha n} \left\| h \right\|_{q-\var;[0,T]}.
	\end{equation}
	
	For two smooth paths $x,y$, and any $0 \leq \beta \leq \alpha$ with $\beta + 1/q >1$, we have that
	\begin{align} \label{eq:YngTrDiff}
		& \int_{0 \leq r_{n+1} \leq \ldots \leq r_1 \leq T} \dif x^{\i_1}_{r_1} \ldots \dif h_{r_j} \ldots \dif x^{\i_n}_{r_{n+1}} - \int_{0 \leq r_{n+1} \leq \ldots \leq r_1 \leq T} \dif y^{i_1}_{r_1} \ldots \dif h_{r_j} \ldots \dif y^{i_n}_{r_{n+1}}   \nonumber \\
		& \leq C T^{\alpha (n-1) + \beta}  \left( 1+ \threebars x \threebars_{\alpha,n}^{ n-1}+ \threebars y \threebars_{\alpha,n}^{ n-1} \right)  \left\| h \right\|_{q-\var;[0,T]} \rho_{\beta,\alpha,n}[ x , y] .
	\end{align}
\end{proposition}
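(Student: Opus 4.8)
The plan is to reduce both estimates to a single scalar Young integral in the variable carrying the $\dif h$-increment, after which~\eqref{eq:YngTr} and~\eqref{eq:YngTrDiff} become consequences of the elementary Young inequality together with Chen's relation. Fixing $j$ and writing $s := r_j$, I would factorise the simplex $\{0 \le r_{n+1}\le\cdots\le r_1\le T\}$ at the $h$-slot: the variables of larger index (smaller time) $r_{j+1},\ldots,r_{n+1}$ range over $\{0 \le r_{n+1}\le\cdots\le r_{j+1}\le s\}$, while those of smaller index $r_1,\ldots,r_{j-1}$ range over $\{s\le r_{j-1}\le\cdots\le r_1\le T\}$. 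Performing these two inner integrations first turns the left-hand side of~\eqref{eq:YngTr} into
\begin{equation*}
	\int_0^T G^x_s\,\dif h_s, \qquad G^x_s := \langle S(x)_{s,T},\mathbf a\rangle\,\langle S(x)_{0,s},\mathbf b\rangle,
\end{equation*}
where, up to an immaterial reversal of letters, $\mathbf a = \i_1\cdots\i_{j-1}$ has length $j-1$ and $\mathbf b = \i_j\cdots\i_n$ has length $n-j+1$, so that $\abs[0]{\mathbf a}+\abs[0]{\mathbf b}=n$. Thus $G^x$ is the product of a backward and a forward signature component of $x$.

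For~\eqref{eq:YngTr} I would first establish the H\"older regularity of $G^x$. By Chen's relation, the increment of the forward component $\langle S(x)_{0,\cdot},\mathbf b\rangle$ over $[s',s]$ is a sum of products $\langle S(x)_{0,s'},\mathbf b_1\rangle\langle S(x)_{s',s},\mathbf b_2\rangle$ with $\mathbf b_2\neq\emptyset$; the least regular term ($\abs[0]{\mathbf b_2}=1$) makes this map $\Hol\alpha$ with seminorm $\lesssim \threebars x \threebars_{\alpha,n}^{\abs[0]{\mathbf b}}T^{\alpha(\abs[0]{\mathbf b}-1)}$, and the analogous decomposition handles the backward component $\langle S(x)_{\cdot,T},\mathbf a\rangle$. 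Combining these through $\norm[0]{FG}_{\Hol\alpha}\le \norm[0]{F}_\infty\norm[0]{G}_{\Hol\alpha}+\norm[0]{F}_{\Hol\alpha}\norm[0]{G}_\infty$ and the sup-bound $\abs[0]{\langle S(x)_{s,t},\w\rangle}\le\threebars x \threebars_{\alpha,n}^{\abs[0]{\w}}\abs[0]{t-s}^{\alpha\abs[0]{\w}}$, I expect $\norm[0]{G^x}_{\Hol\alpha}\lesssim \threebars x \threebars_{\alpha,n}^{n}T^{\alpha(n-1)}$, hence $\left\|G^x\right\|_{p-\var;[0,T]}\lesssim \threebars x \threebars_{\alpha,n}^n T^{\alpha n}$ for $p=1/\alpha$. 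Since $1/p+1/q=\alpha+1/q>1$, the scalar Young inequality (see~\cite{friz-victoir-book}) gives $\abs[0]{\int_0^T G^x\,\dif h}\le \abs[0]{G^x_0}\,\abs[0]{h_T-h_0}+C\left\|G^x\right\|_{p-\var;[0,T]}\left\|h\right\|_{q-\var;[0,T]}$, and as $G^x_0$ vanishes unless $\mathbf b=\emptyset$ (in which case $\abs[0]{G^x_0}\le\threebars x \threebars_{\alpha,n}^n T^{\alpha n}$) and $\abs[0]{h_T-h_0}\le\left\|h\right\|_{q-\var;[0,T]}$, the bound~\eqref{eq:YngTr} follows, with the prefactor $(1+\,\cdot\,)$ absorbing the low-order cases.

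For~\eqref{eq:YngTrDiff} I would write the difference of the two iterated integrals as $\int_0^T (G^x_s-G^y_s)\,\dif h_s$ and telescope:
\begin{align*}
	G^x_s-G^y_s
	={}& \langle S(x)_{s,T}-S(y)_{s,T},\mathbf a\rangle\,\langle S(x)_{0,s},\mathbf b\rangle \\
	&+ \langle S(y)_{s,T},\mathbf a\rangle\,\langle S(x)_{0,s}-S(y)_{0,s},\mathbf b\rangle .
\end{align*}
The definition of $\rho_{\beta,\alpha,n}$ bounds the two difference factors by $\rho_{\beta,\alpha,n}[x,y]\,\abs[0]{\cdot}^{\beta+\alpha(\abs[0]{\mathbf a}-1)}$ and $\rho_{\beta,\alpha,n}[x,y]\,\abs[0]{\cdot}^{\beta+\alpha(\abs[0]{\mathbf b}-1)}$, respectively. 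The crucial observation is that the first line vanishes when $\mathbf a=\emptyset$ and the second when $\mathbf b=\emptyset$, so that whenever a term survives, the accompanying \emph{plain} signature factor has length at most $n-1$; this is exactly what reduces the power of $\threebars\,\cdot\,\threebars_{\alpha,n}$ from $n$ to $n-1$. Tracking the same H\"older and sup estimates as before, but now with the least regular contribution scaling like $\abs[0]{\cdot}^{\beta}$, I expect $\norm[0]{G^x-G^y}_{\Hol\beta}\lesssim \rho_{\beta,\alpha,n}[x,y]\,T^{\alpha(n-1)}\bigl(1+\threebars x \threebars_{\alpha,n}^{n-1}+\threebars y \threebars_{\alpha,n}^{n-1}\bigr)$, together with the same bound for $\abs[0]{G^x_0-G^y_0}$ carrying the extra factor $T^{\beta}$. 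Since now $\beta+1/q>1$, Young's inequality with $p=1/\beta$ and $\left\|G^x-G^y\right\|_{p-\var;[0,T]}\le\norm[0]{G^x-G^y}_{\Hol\beta}\,T^{\beta}$ then yields~\eqref{eq:YngTrDiff}.

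The genuine work sits in the variation estimates of the products $G^x$ and $G^x-G^y$: one has to apply Chen's relation to \emph{both} the forward and the backward signature components and then carefully track the accumulating powers of $T$ and of $\threebars\,\cdot\,\threebars_{\alpha,n}$ — in particular the length drop that produces the exponent $n-1$ in~\eqref{eq:YngTrDiff}. Once these norms are controlled, both inequalities are immediate applications of the one-dimensional Young inequality, so I anticipate the bookkeeping of these regularity estimates, rather than the Young step itself, to be the main obstacle.
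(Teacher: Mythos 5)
Your proposal is correct, but it cannot be compared line by line with the paper's argument for the simple reason that the paper gives none: the authors state that these facts are ``well-known when $\alpha=\beta$'' and defer to the Young translation of rough paths in \cite[Cond.~20.1]{friz-victoir-book}, asserting that the general case is similar. Your route supplies exactly the elementary mechanism behind that citation: factorising the simplex at the $h$-slot reduces the iterated integral to the scalar Young integral $\int_0^T G^x_s\,\dif h_s$ with $G^x_s=\langle S(x)_{s,T},\mathbf a\rangle\langle S(x)_{0,s},\mathbf b\rangle$, after which Chen's relation controls the $\Hol{\alpha}$ (resp.\ $\Hol{\beta}$) norm of $G^x$ (resp.\ $G^x-G^y$) and the one-dimensional Young inequality does the rest. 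This has the virtue of making visible where each hypothesis enters ($\alpha+1/q>1$ for \eqref{eq:YngTr}, $\beta+1/q>1$ for \eqref{eq:YngTrDiff}) and of isolating the source of the exponent drop from $n$ to $n-1$, namely that the empty-word differences vanish so every surviving term pairs a $\rho_{\beta,\alpha,n}$-factor with a plain signature factor of length at most $n-1$. Two points should be written out in a full version. First, the $\Hol{\beta}$ seminorm in $s$ of $\langle S(x)_{s,T}-S(y)_{s,T},\mathbf a\rangle$ requires a second telescoping \emph{inside} Chen's relation (split $\langle S(x)_{s',s},\mathbf a_1\rangle\langle S(x)_{s,T},\mathbf a_2\rangle-\langle S(y)_{s',s},\mathbf a_1\rangle\langle S(y)_{s,T},\mathbf a_2\rangle$ once more); after converting $|s-s'|^{\alpha|\mathbf a_1|}$ into $|s-s'|^{\beta}T^{\alpha|\mathbf a_1|-\beta}$ the powers of $T$ and of $\threebars x\threebars_{\alpha,n},\threebars y\threebars_{\alpha,n}$ do recombine to $T^{\alpha(|\mathbf a|-1)}$ and at most $n-1$, so your claimed bound holds, but this is the step where the bookkeeping genuinely bites. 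Second, the boundary term $G_0(h_T-h_0)$ in Young's inequality survives only when $\mathbf b=\emptyset$ (i.e.\ $j=n+1$), where it is bounded exactly as required --- you note this correctly. With these details filled in, both inequalities follow with constants depending only on $\alpha$, $q$ and $n$ (the ``$p$'' in the statement of the proposition appears to be a typo for $q$).
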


We will also need the following Kolmogorov-type Lemma.

\begin{proposition} \label{prop:Kolm}
	Let $X =(X_{t})_{ t \in [0,T]}$ be a random process with paths of finite variation. Then, for any $p \geq 1$,  $\alpha, \beta > 0$ with $\alpha' < \alpha - \frac{1}{p}$, any $n \geq 1$, there exists $C_1 >0$, s.t.
	\begin{equation} \label{eq:K1}
		\sup_{0\leq s< t \leq T} \sup_{|\w|\leq n} \frac{ \norm[1]{\left\langle S(X)_{s,t}, \w \right\rangle}_{L^{\frac{p}{|\w|}}(\P)}}{|t-s|^{\alpha}} 
		\leq K \; \; \Rightarrow \;\; \norm[1]{\threebars X \threebars_{\alpha',n}}_{L^{\frac{p}{n}}(\Omega)} \leq C_1 (K + K^n).
	\end{equation}
	Similarly, given two processes $X$, $Y$ of bounded variation paths, for any  $p \geq 1$, $n \geq 1$,  $\alpha, \alpha' , \beta, \beta'> 0$ with $\alpha' < \alpha - \frac{1}{p}$, $\beta'< \beta - \frac{1}{p}$, there exists $C_2 >0 $, s.t. the bounds
	\begin{align*}
		&\sup_{0\leq s< t \leq T} \sup_{|\w|\leq n} \frac{ \norm[1]{\left\langle S(X)_{s,t}, \w \right\rangle}_{L^{\frac{p}{|\w|}}(\P)} + \norm[1]{\left\langle S(Y)_{s,t}, \w \right\rangle}_{L^{\frac{p}{|\w|}}(\P)}}{|t-s|^{\alpha |\w|}} \leq K, \\[0.5em]
		%%%%%%%%%%%%%%%%%%%%%%%%%%%%%%%%%%%%%%%%%%%%%%%%%%%%%%%%%%%%%%%%
		&\sup_{0\leq s< t \leq T} \sup_{|\w|\leq n} \frac{\norm[1]{\left\langle S(X)_{s,t}, \w \right\rangle - \left\langle S(Y)_{s,t}, \w \right\rangle}_{L^{\frac{p}{|\w|}}(\P)}}{|t-s|^{\alpha(|\w|-1)+\beta}} \leq \varepsilon,
	\end{align*}
	imply the estimate
	\begin{equation*}
		\left\| \rho_{\beta,\alpha,n}[ X, Y] \right\|_{L^{\frac{p}{n}}(\Omega)} \leq C_2 (1+K^{n-1}) \varepsilon.
	\end{equation*}
\end{proposition}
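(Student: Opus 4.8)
The statement is a Kolmogorov--Chentsov continuity criterion for the multiplicative functional $S(X)$, and my plan is to prove both parts by a Garsia--Rodemich--Rumsey (GRR) argument carried out level by level. The first observation is that, since $\P$ is a probability measure, $L^{p/k}(\Omega)\hookrightarrow L^{p/n}(\Omega)$ for every $k\le n$; as there are only finitely many words of length at most $n$, it suffices to bound, for each fixed level $k$, the seminorm $\sup_{|\w|=k}\sup_{s<t}|\langle S(X)_{s,t},\w\rangle|\,/\,|t-s|^{k\alpha'}$ in $L^{p/k}$ and then take the maximum. The index $p/k$ is chosen so that GRR produces the right exponent: in the additive case it upgrades a moment bound $\|R_{s,t}\|_{L^q}\lesssim|t-s|^{\kappa}$ into an almost sure $\gamma$--Hölder bound, with seminorm controlled in $L^q$, for any $\gamma<\kappa-\tfrac1q$, and taking $\kappa=k\alpha$, $q=p/k$ gives $\kappa-\tfrac1q=k(\alpha-\tfrac1p)>k\alpha'$, exactly the target exponent.

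For the first estimate I would induct on the level $k$. The base case $k=1$ is the classical Kolmogorov--Chentsov theorem applied to the increments of $X$, giving $\|\threebars X\threebars_{\alpha',1}\|_{L^p}\lesssim K$. For $k\ge2$ the components $\langle S(X)_{s,t},\w\rangle$ are no longer additive in the middle variable; instead Chen's relation gives
\[
\langle S(X)_{s,t},\w\rangle-\langle S(X)_{s,u},\w\rangle-\langle S(X)_{u,t},\w\rangle=\sum_{\substack{\w=\w_1\w_2\\ 1\le|\w_1|\le k-1}}\langle S(X)_{s,u},\w_1\rangle\,\langle S(X)_{u,t},\w_2\rangle,
\]
whose right--hand side is a sum of products of \emph{strictly lower} levels. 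By the inductive hypothesis these are already Hölder controlled, so the defect is bounded pathwise by $\lesssim\threebars X\threebars_{\alpha',k-1}^{\,k}\,|t-s|^{k\alpha'}$, and the multiplicative version of the GRR/Kolmogorov criterion (in the form used in~\cite{friz-victoir-book}) upgrades the diagonal moment bound into the full level-$k$ Hölder seminorm, controlled by $K$ plus products of lower-level seminorms. Multiplying out the latter produces terms up to $K^k$; summing over $k\le n$ and using $K+K^2+\dots+K^n\lesssim K+K^n$ gives the claim.

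The second estimate is proved by the same induction applied to the differences $D^{\w}_{s,t}:=\langle S(X)_{s,t}-S(Y)_{s,t},\w\rangle$. Expanding each product in Chen's relation via $ab-a'b'=(a-a')b+a'(b-b')$ yields
\[
D^{\w}_{s,t}-D^{\w}_{s,u}-D^{\w}_{u,t}=\sum_{\substack{\w=\w_1\w_2\\ 1\le|\w_1|\le k-1}}\Big(D^{\w_1}_{s,u}\,\langle S(X)_{u,t},\w_2\rangle+\langle S(Y)_{s,u},\w_1\rangle\,D^{\w_2}_{u,t}\Big),
\]
so every summand carries exactly one difference factor (contributing a lower-level $\rho$-seminorm, of size $\varepsilon$) and one genuine signature factor (contributing $K$). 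Running the level-by-level GRR induction with the reduced exponents $\alpha'<\alpha-\tfrac1p$ and $\beta'<\beta-\tfrac1p$, the level-$k$ $\rho$-seminorm is bounded by $\varepsilon$ plus products containing precisely one difference factor and at most $k-1$ signature factors; the homogeneity bookkeeping then produces exactly one power of $\varepsilon$ and at most $n-1$ powers of $K$, i.e. $\|\rho_{\beta',\alpha',n}[X,Y]\|_{L^{p/n}}\lesssim(1+K^{n-1})\varepsilon$.

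The main obstacle throughout is precisely the non-additivity of the signature components: at the low levels, where $k\alpha'\le1$, the Chen defect is not of exponent $>1$, so the naive additive sewing lemma does not apply and one must instead run the genuinely multiplicative GRR/Kolmogorov argument, upgrading dyadic-scale control to Hölder control through the group structure rather than by sewing an additive remainder. The second delicate point, more bookkeeping than conceptual, is to track the homogeneity in $K$ (and, in the second part, the single factor of $\varepsilon$) so that the final bounds take exactly the stated form $K+K^n$ and $(1+K^{n-1})\varepsilon$.
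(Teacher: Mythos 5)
Your proof is correct and is essentially the standard argument: the paper does not actually prove Proposition~\ref{prop:Kolm} (it defers to the $\alpha=\beta$ case in \cite{friz-victoir-book}, remarking that the general case is similar), and your level-by-level induction --- Chen's relation to reduce the non-additive defect to products of lower-level terms, a dyadic/GRR upgrade of the diagonal moment bounds with the exponent bookkeeping $k\alpha - k/p > k\alpha'$, H\"older's inequality to place the cross products in $L^{p/k}$, and the bilinear splitting $ab-a'b'=(a-a')b+a'(b-b')$ for the difference estimate --- is precisely that standard proof. Note that you have implicitly (and correctly) read the statement with its typos fixed: the denominator in \eqref{eq:K1} should be $|t-s|^{\alpha|\w|}$, and the conclusion of the second part should be stated for $\rho_{\beta',\alpha',n}$ rather than $\rho_{\beta,\alpha,n}$.
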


\begin{notation} \label{not:Nalphabeta}
	Given $0 < \beta \leq \alpha$, let $N_{\beta,\alpha}$ be the largest integer $n$ s.t. $\beta + (n-1) \alpha \leq 1$. We then let
	\[ \rho_{\beta,\alpha} := \rho_{\beta,\alpha,N_{\beta,\alpha}} \]
\end{notation}

This notation is consistent with usual rough path distances due to the following version of Lyons' extension theorem, which shows that the higher levels depend continuously on the first $N_{\beta,\alpha}$.

\begin{proposition} \label{prop:LExt}
	Let $0 < \beta \leq \alpha \leq 1$ and $N_{\beta,\alpha}$ be as in Notation~\ref{not:Nalphabeta}. Then for each $n \geq N_{\beta,\alpha}$, there exists a constant $C$ s.t.  for any two continuous paths of bounded variation $x,y :[0,T] \to \R^\d$, it holds that
	\begin{equation}
		\rho_{\beta,\alpha,n}[x;y] \leq C \left( 1 + \threebars x \threebars_{\alpha, n-N_{\beta,\alpha}} + \threebars y \threebars_{\alpha, n-N_{\beta,\alpha}} \right) \rho_{\beta,\alpha}[x;y]. 
	\end{equation}
\end{proposition}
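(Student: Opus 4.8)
The plan is to induct on $n \geq N := N_{\beta,\alpha}$, the base case $n = N$ being immediate since there $\rho_{\beta,\alpha,n} = \rho_{\beta,\alpha}$ and the prefactor is at least $1$. For the inductive step I would fix $n > N$ and, writing $\Gamma_{s,t}(\w) := \langle S(x)_{s,t},\w\rangle - \langle S(y)_{s,t},\w\rangle$, observe that all words of length at most $n-1$ are already controlled by $\rho_{\beta,\alpha,n-1}$ and hence, by the inductive hypothesis, by the right-hand side; so only words $\w$ with $|\w| = n$ remain to be treated, and their contribution together with the lower levels reconstitutes $\rho_{\beta,\alpha,n}$.

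First I would expand the second-order increment $\delta\Gamma_{s,u,t}(\w) := \Gamma_{s,t}(\w) - \Gamma_{s,u}(\w) - \Gamma_{u,t}(\w)$ by subtracting the Chen relations for $S(x)$ and $S(y)$ and telescoping each product of signatures. Denoting by $\w_{1:j}$ and $\w_{j+1:n}$ the words made of the first $j$ and the last $n-j$ letters of $\w$, this gives
\begin{align*}
	\delta\Gamma_{s,u,t}(\w) = \sum_{j=1}^{n-1}\Big[ &\langle S(x)_{s,u} - S(y)_{s,u},\w_{1:j}\rangle\,\langle S(x)_{u,t},\w_{j+1:n}\rangle \\
	&+ \langle S(y)_{s,u},\w_{1:j}\rangle\,\langle S(x)_{u,t} - S(y)_{u,t},\w_{j+1:n}\rangle\Big].
\end{align*}
Estimating the difference factors with the inductive hypothesis (their words have length $\leq n-1$) and the pure signature factors with the homogeneous norms $\threebars \cdot \threebars_{\alpha,\cdot}$, and using $|u-s|, |t-u| \leq |t-s|$, every summand is bounded by a constant multiple of $|t-s|^{\beta + (n-1)\alpha}$. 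The decisive point is that $|\w| = n > N$ forces $\beta + (n-1)\alpha > 1$ --- this is exactly the defining property of $N_{\beta,\alpha}$ --- so the exponent $\theta := \beta + (n-1)\alpha$ is strictly larger than $1$.

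With $\theta > 1$ secured, $(s,t) \mapsto \Gamma_{s,t}(\w)$ is an almost-additive two-parameter map to which the sewing lemma (see e.g.~\cite{friz-victoir-book}) applies. Since $|\w| = n \geq 2$ and $x,y$ have bounded variation, the Riemann sums $\sum_{[u,v]\in\pi}\Gamma_{u,v}(\w)$ along partitions $\pi$ of $[s,t]$ tend to $0$ as the mesh vanishes --- each term is $O(\mathrm{var}^n)$, so the sum is dominated by $(\max_{[u,v]}\mathrm{var})^{n-1}$ times the total variation --- whence the additive map produced by sewing is identically zero. The sewing a priori bound then collapses to $|\Gamma_{s,t}(\w)| \leq C_\theta\, M_\w\, |t-s|^\theta$, where $M_\w$ denotes the constant bounding $|\delta\Gamma_{s,u,t}(\w)|\,|t-s|^{-\theta}$ from the previous step. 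Dividing by $|t-s|^\theta$ produces the required control of the level-$n$ words.

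The last task is the bookkeeping of constants. The products of homogeneous norms thrown up by the two families of summands would be reorganised through the shuffle identity $\langle S(\cdot),\w_1\rangle\langle S(\cdot),\w_2\rangle = \sum_{\w \in \w_1 \shuffle \w_2}\langle S(\cdot),\w\rangle$, which yields the sub-multiplicativity $\threebars \cdot \threebars_{\alpha,a}\,\threebars \cdot \threebars_{\alpha,b} \lesssim_{a,b} \threebars \cdot \threebars_{\alpha,a+b}$; combined with the inductive hypothesis this is meant to compress the accumulated norm factors into the single prefactor $1 + \threebars x \threebars_{\alpha,n-N} + \threebars y \threebars_{\alpha,n-N}$ and close the induction. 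I expect this compression to be the main obstacle: one must track precisely which intermediate levels of the homogeneous norm enter the defect estimate and verify that the shuffle relations --- rather than a cruder term-by-term bound --- bring them down to level $n-N$ with a constant that does not degenerate. By contrast, the verification $\theta > 1$ and the sewing step are routine once the decomposition is in place.
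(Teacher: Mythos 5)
You should first be aware that the paper does not actually prove this proposition: it is listed among the "facts on the distance $\rho_{\beta,\alpha,n}$" whose proofs are declared to be similar to the well-known case $\alpha=\beta$ in \cite{friz-victoir-book}, so there is no in-paper argument to compare against. Your route is precisely the standard one: Chen's relation plus telescoping to compute the defect $\delta\Gamma_{s,u,t}(\w)$, the observation that $n>N_{\beta,\alpha}$ forces $\theta=\beta+(n-1)\alpha>1$ (by maximality of $N_{\beta,\alpha}$), and the sewing lemma combined with the vanishing of the Riemann sums for continuous bounded-variation paths (here $n\geq 2$ since $N_{\beta,\alpha}\geq 1$). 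That analytic core is correct and is what one would write down.

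The genuine problem is the step you yourself flag as "the main obstacle": the compression of the accumulated norm factors to level $n-N_{\beta,\alpha}$ cannot be carried out, because the proposition with that prefactor is false as printed. In your defect formula the $j=1$ summand is $\langle S(x)_{s,u}-S(y)_{s,u},\w_{1:1}\rangle\,\langle S(x)_{u,t},\w_{2:n}\rangle$, i.e.\ a level-$1$ difference times a level-$(n-1)$ signature block of $x$ alone, and the latter genuinely requires $\threebars x\threebars_{\alpha,n-1}$. Concretely, take $\beta=\alpha\in(\tfrac13,\tfrac12)$, so $N_{\beta,\alpha}=2$, and $n=3$ with $\d=3$, $T=1$; let $x(t)=m^{-\alpha}(\cos(2\pi m t),\sin(2\pi m t),0)$ and $y=x+(0,0,\varepsilon t)$. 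Then $\threebars x\threebars_{\alpha,1}$, $\threebars y\threebars_{\alpha,1}$ and $\rho_{\alpha,\alpha,2}[x,y]/\varepsilon$ are bounded uniformly in $m$, yet the level-three difference for the word with letters $1,2,3$ equals
\begin{equation*}
  \varepsilon\int_0^1\big\langle S(x)_{0,r},e_1\otimes e_2\big\rangle\,\dif r \;=\; \tfrac{\pi}{2}\,\varepsilon\, m^{1-2\alpha}+O\del[0]{\varepsilon m^{-2\alpha}},
\end{equation*}
which diverges as $m\to\infty$ because the L\'evy area of $x$ is of order $m^{1-2\alpha}$ while its increments stay $O(|t-s|^{\alpha})$. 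So no constant $C$ can work with the prefactor $1+\threebars x\threebars_{\alpha,1}+\threebars y\threebars_{\alpha,1}$. Your proposed rescue via the shuffle identity does not help: it rewrites a product of two signature coefficients over the \emph{same} interval as a sum over \emph{longer} words, so it trades low-level products for a \emph{higher}-level norm, not the reverse, and the asserted norm inequality $\threebars\cdot\threebars_{\alpha,a}\threebars\cdot\threebars_{\alpha,b}\lesssim\threebars\cdot\threebars_{\alpha,a+b}$ is in any case false because the two suprema need not be attained on the same interval. What your argument does prove --- and all the paper needs, since Corollary~\ref{cor:BetaLE} only asserts the existence of \emph{some} level $M$ and Theorem~\ref{thm:cyrXX'} bounds $\threebars X\threebars_{\alpha,M}$ for every $M$ --- is the estimate with $n-N_{\beta,\alpha}$ replaced by $n-1$ and with polynomial (rather than linear) dependence on these norms, e.g.\ a prefactor $\del[0]{1+\threebars x\threebars_{\alpha,n-1}+\threebars y\threebars_{\alpha,n-1}}^{\,n-N_{\beta,\alpha}}$ obtained by iterating your induction. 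You should state and prove that corrected version rather than attempt the compression.
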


\begin{corollary} \label{cor:BetaLE}
	Given $0 < \beta \leq \alpha$, there exists constants $C$ and $M$ s.t.  for any two continuous paths of bounded variation $x,y :[0,T] \to \R^\d$, it holds that
	\begin{equation}
		\rho_{\beta}[x;y] \leq C \left( 1 + \threebars x \threebars_{\alpha, M} + \threebars y \threebars_{\alpha, M} \right) \rho_{\beta,\alpha}[x;y]. 
	\end{equation}
\end{corollary}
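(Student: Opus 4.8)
The plan is to deduce the corollary from Proposition~\ref{prop:LExt} after reconciling the two different H\"older scales~$\beta$ and~$\alpha$ that appear in~$\rho_\beta$ and~$\rho_{\beta,\alpha}$. By the conventions fixed above, $\rho_\beta = \rho_{\beta,\beta,m}$ with~$m := \lceil \beta^{-1}\rceil$, whereas~$\rho_{\beta,\alpha} = \rho_{\beta,\alpha,N_{\beta,\alpha}}$ with~$N_{\beta,\alpha}$ as in Notation~\ref{not:Nalphabeta}. These two distances differ both in the exponent governing the time-scaling and in the number of word-levels entering the supremum, and I would address the two discrepancies in turn.

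First I would compare~$\rho_{\beta,\beta,m}$ with~$\rho_{\beta,\alpha,m}$ at the same number of levels. For any word~$\w$ and any~$s<t$, the two denominators are related by
\begin{equation*}
	|t-s|^{\beta|\w|} = |t-s|^{\beta + \beta(|\w|-1)} = |t-s|^{\beta + \alpha(|\w|-1)}\,|t-s|^{-(\alpha-\beta)(|\w|-1)},
\end{equation*}
and since~$\alpha \geq \beta$ and~$|\w| \geq 1$ the exponent~$(\alpha-\beta)(|\w|-1)$ is non-negative, so that~$|t-s|^{(\alpha-\beta)(|\w|-1)} \leq T^{(\alpha-\beta)(|\w|-1)}$. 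Dividing the numerator~$|\langle S(x)_{s,t}-S(y)_{s,t},\w\rangle|$ by each denominator and taking the supremum over~$|\w| \leq m$ and~$s<t$ then yields~$\rho_{\beta,\beta,m}[x;y] \leq C_1\, \rho_{\beta,\alpha,m}[x;y]$ for some constant~$C_1 = C_1(\alpha,\beta,m,T)$.

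Second I would invoke Proposition~\ref{prop:LExt}. A short computation with Notation~\ref{not:Nalphabeta} gives~$N_{\beta,\alpha} = 1 + \lfloor (1-\beta)/\alpha \rfloor \leq 1 + \lfloor (1-\beta)/\beta \rfloor = \lfloor \beta^{-1}\rfloor \leq m$, where the first inequality uses~$\alpha \geq \beta$. Hence the proposition applies at level~$n = m \geq N_{\beta,\alpha}$ and delivers
\begin{equation*}
	\rho_{\beta,\alpha,m}[x;y] \leq C_2 \left(1 + \threebars x \threebars_{\alpha,M} + \threebars y \threebars_{\alpha,M}\right) \rho_{\beta,\alpha}[x;y], \qquad M := m - N_{\beta,\alpha}.
\end{equation*}
Chaining this with the bound of the first step and setting~$C := C_1 C_2$ produces exactly the claimed estimate.

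Since both steps are elementary, I do not anticipate a genuine obstacle: the argument is pure bookkeeping of H\"older exponents and word-levels. The only two points that require a little care are the appearance of a~$T$-dependent constant in the first step, which is harmless because~$T$ is fixed and the constant in the statement may depend on it, and the verification that~$m = \lceil\beta^{-1}\rceil \geq N_{\beta,\alpha}$, so that Proposition~\ref{prop:LExt} is genuinely applicable at level~$m$ (the borderline case~$M = 0$ being admissible, as~$\threebars \cdot \threebars_{\alpha,0} \equiv 1$).
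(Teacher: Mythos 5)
Your argument is correct and is exactly the intended route: the paper omits the proof of this corollary as a standard fact, and the natural argument is precisely your two-step reduction, first absorbing the change of denominator from $|t-s|^{\beta|\w|}$ to $|t-s|^{\beta+\alpha(|\w|-1)}$ into a $T$-dependent constant (using $\alpha\geq\beta$), and then applying Proposition~\ref{prop:LExt} at level $m=\lceil\beta^{-1}\rceil\geq N_{\beta,\alpha}$. Your bookkeeping ($N_{\beta,\alpha}=1+\lfloor(1-\beta)/\alpha\rfloor\leq\lfloor\beta^{-1}\rfloor\leq m$, and the harmlessness of the borderline case $M=0$) checks out.
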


\begin{remark} \label{rmk_alphabeta}
	The reason why we need to work with the $\beta \neq \alpha$ distances is precisely that in the applications we have in mind, such as obtaining optimal convergence rates of Gaussian rough paths, it may be that $\beta \ll \alpha$, in which case $N_{\beta,\beta}$ would be very large but $N_{\alpha,\beta}$ stays bounded, allowing us to only consider level 3 iterated integrals. This is similar to considerations in \cite{FR14}.
\end{remark}

\subsection{Main result} \label{sec_main_result}

\noindent
Let $X :[0,T] \to \R^\d$ be a continuous Gaussian process with \emph{independent} components~$X^{\i}$, $\i \in \llbracket \1,\d \rrbracket$, whose C--M space is~$\cH^{\i}$.
The C--M space of $X$ is then given by the direct sum ${\cH} = \oplus_{\i=1}^\d \cH^{\i}$, see~\cite[Example~2.3.8]{Bogachev}.
The following is our main result; we comment on the restriction on~$\alpha$ in Remark~\ref{rem:iid} below. 

\begin{theorem} \label{thm:cCYR1}
	Let $X$ be as above. Assume that for some~$q \geq 1$ and $\alpha > \frac{1}{4}$ with $\frac{1}{q}+ \alpha >1$, there exists $K>0$ s.t. for any $\i \in \llbracket \1,\d \rrbracket$, we have
	\begin{equation} \label{eq:ccyr}
		\forall\,  h \in \cH^{\i} \; \forall \, 0 \leq s \leq t \leq T: \quad \norm[0]{h}_{q-\var;[s,t]} \leq  K (t-s)^\alpha  \norm{h}_{\cH^{\i}},
	\end{equation}
	a condition we call \emph{controlled complementary Young regularity~(cCYR)}.
	Then, $X$ admits a canonical $\alpha^-$ H\"older rough path lift, with bounds only depending on the constant $K$ in the inequality.
\end{theorem}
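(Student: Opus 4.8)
\noindent\emph{Proof strategy.} The plan is to establish moment bounds on the signature increments of smooth (bounded-variation) approximations $X^{(m)}$ of $X$, uniform in $m$, and to feed them into the Kolmogorov criterion of Proposition~\ref{prop:Kolm}; the novelty is that the \emph{fluctuations} of these increments are controlled through the Poincar\'e inequality on Wiener space rather than through the $2$D variation of the covariance. Fix a word $\w$ of length $k \le N_{\beta,\alpha}$ and, for $0 \le s \le t \le T$, write $F := \langle S(X^{(m)})_{s,t},\w\rangle$. By the last bullet of Section~\ref{sec_notation} together with Chen's relation, $F \in \cP_k$, so hypercontractivity reduces the task to bounding $\norm[0]{F}_{L^2(\P)}$; the Poincar\'e inequality then splits this into the fluctuation term $\norm[1]{\norm[0]{DF}_\cH}_{L^2(\P)}$ and the mean $\abs[0]{\E[F]}$, and I would show by induction on $k$ that both are $\lesssim_K (t-s)^{k\alpha}$, uniformly in $m$.

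For the fluctuation term the key observation is that the directional Malliavin derivative $\langle DF,h\rangle_\cH$ in a direction $h \in \cH^\i$ equals the sum, over the positions in $\w$ carrying the letter $\i$, of the iterated integral obtained from $F$ by replacing the corresponding differential $\dif X^\i$ with the Young differential $\dif h$ --- which is meaningful precisely because $\tfrac1q + \alpha > 1$. Each such term is of the form estimated in Proposition~\ref{prop:YoungTr}, so that \eqref{eq:YngTr} combined with the cCYR bound \eqref{eq:ccyr}, $\norm[0]{h}_{q\text{-var};[s,t]} \le K (t-s)^\alpha \norm[0]{h}_{\cH^\i}$, gives the pointwise estimate $\norm[0]{DF}_\cH \lesssim_K (1 + \threebars X^{(m)} \threebars_{\alpha,k-1}^{\,k-1})(t-s)^{k\alpha}$, where I use $\cH = \oplus_{\i} \cH^\i$ and that $\d$ is finite. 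Taking $L^2(\P)$-norms and invoking the induction hypothesis at levels $< k$ --- promoted from increment moments to a moment bound on $\threebars X^{(m)}\threebars_{\alpha,k-1}$ via Proposition~\ref{prop:Kolm} and hypercontractivity --- then yields $\norm[1]{\norm[0]{DF}_\cH}_{L^2(\P)} \lesssim_K (t-s)^{k\alpha}$.

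The mean $\E[F]$ vanishes for odd $k$, since an odd number of centred jointly Gaussian factors admits no Wick pairing, and is therefore only an issue at even levels. I would handle it by the same mechanism: Gaussian integration by parts in the outermost integral of $F = \int_s^t \langle S(X^{(m)})_{s,u},\w'\rangle \,\dif X^{(m),\i_k}_u$ (with $\w = \w'\i_k$) expresses $\E[F]$ through the Malliavin derivative of the level-$(k-1)$ signature, thus again producing Young integrals against Cameron--Martin paths whose regularity is dictated by \eqref{eq:ccyr}; unwinding the recursion drops the level by two and closes the induction inside the same cCYR/Young framework, the only covariance input being $\abs[0]{\E[(X^\i_u - X^\i_{u'})(X^\i_v - X^\i_{v'})]} \le K^2 \abs[0]{u-u'}^\alpha \abs[0]{v-v'}^\alpha$, itself immediate from \eqref{eq:ccyr} and Cauchy--Schwarz. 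Combining the two estimates gives $\norm[0]{\langle S(X^{(m)})_{s,t},\w\rangle}_{L^{p/k}(\P)} \lesssim_K (t-s)^{k\alpha}$ for all $p$. As only finitely many levels $k \le N_{\beta,\alpha}$ occur --- here the hypothesis $\alpha > \tfrac14$ and the freedom to take $\beta \neq \alpha$ (Remark~\ref{rmk_alphabeta}) keep $N_{\beta,\alpha}$ small --- Proposition~\ref{prop:Kolm} converts these into a uniform bound on $\threebars X^{(m)} \threebars_{\alpha',N}$ for any $\alpha' < \alpha$, the finitely many Kolmogorov applications each costing an arbitrarily small amount of regularity; this is the origin of the exponent $\alpha^-$. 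Rerunning the argument on differences, with \eqref{eq:YngTrDiff} and the second half of Proposition~\ref{prop:Kolm}, shows $(S(X^{(m)}))_m$ to be $\rho_{\beta,\alpha}$-Cauchy, hence $\rho_{\alpha'}$-Cauchy by Corollary~\ref{cor:BetaLE}; its limit is the canonical lift, with all constants depending only on $K$.

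I expect the main obstacle to be closing the induction at the fluctuation step: the derivative bound features the \emph{random} lower-level norm $\threebars X^{(m)}\threebars_{\alpha,k-1}$, so one must match integrability exponents carefully --- using hypercontractivity, which is available because every quantity lives in a fixed inhomogeneous chaos --- in order to turn the previous level's increment bounds into an $L^2$ control of $\norm[0]{DF}_\cH$, all while keeping the accumulated H\"older loss in check. The mean term is a genuine but more classical second ingredient.
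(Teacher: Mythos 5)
Your proposal is correct and follows essentially the same route as the paper: reduce to $L^2$ moment bounds on signature increments via Proposition~\ref{prop:Kolm} and hypercontractivity, split with the Poincar\'e inequality, control the gradient term by Young translation (Proposition~\ref{prop:YoungTr}) plus \eqref{eq:ccyr} and the inductive hypothesis, and handle the mean by Wick/parity. The only divergence is your sketched integration-by-parts recursion for the mean at general even levels, which is unnecessary here (and is precisely where the restriction $\alpha>\tfrac14$, i.e.\ $n\le 3$, enters): the paper simply notes that only $n=2$ with $\w=\i\i$ survives by independence and parity, where the mean is $\tfrac12\E[(X^{\i}_{s,t})^2]\lesssim K^2(t-s)^{2\alpha}$ --- exactly the covariance bound you derive from \eqref{eq:ccyr} and Cauchy--Schwarz.
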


\begin{proof}
	We assume that $X$ takes values in smooth ($1$-variation) paths. The general case follows by approximation, since the bounds we obtain are uniform.
	For any word $\w = \i_1 \ldots \i_n$ with length~$\abs[0]{\w} = n$, $n \in \N$, we aim to show that
	\begin{equation*}
		\left\| \sup_{0 \leq s<t \leq T} \frac{|\left\langle S(X)_{s,t}, \w \right\rangle|}{|t-s|^{n (\alpha -\varepsilon)}} \right\|_{L^r(\P)}^r \leq  C(K, \alpha,\varepsilon, n) \ \del[2]{1 + \E\sbr[2]{\threebars X \threebars_{\alpha-\eps,n-1}^{r(n-1) \vee 0}}}
	\end{equation*}
	for each $\varepsilon>0$ and each $r \geq 1$.
	In fact, by Kolmogorov's criterion (Proposition \ref{prop:Kolm}) we need only prove that for given $s<t$, the bound
	\begin{equation*}
		\norm[1]{\left\langle S(X)_{s,t}, \w \right\rangle}_{L^2(\P)}  \lesssim \abs[0]{t-s}^{(\alpha - \varepsilon) n}
	\end{equation*}
	for
	\begin{equation*}
		\left\langle S(X)_{s,t}, \w \right\rangle 
		:= \int_{s < r_1 < \ldots < r_n < t} \dif X^{\i_1}_{r_1} \dif X^{\i_2}_{r_2} \ldots \dif X^{\i_n}_{r_n}
	\end{equation*}
	holds, where $0 < \eps \ll 1$ is arbitrary. By Gaussian hypercontractivity in finite Wiener chaoses, the same bound then holds for all~$L^p(\P)$-norms, only with a different constant.
	
	We proceed by induction on the length~$\abs[0]{\w} = n$ and note that the case $n=0$ is trivial.
	Since~$\alpha > \frac{1}{4}$, we may also assume $n \leq 3$; the bounds for the higher levels are then automatic by Lyons' extension theorem, Proposition \ref{prop:LExt}.
	Thus, let~$n \in \{1,2,3\}$.	
	By the \emph{Poincar\'e inequality on Wiener space}, see Section~\ref{sec_notation}, we obtain the bound
	\begin{equation}
		\norm[1]{\left\langle S(X)_{s,t}, \w \right\rangle}_{L^2(\P)}
		\lesssim \, 
		\norm[3]{\sup_{\|h\|_{{\cH}} \leq 1} \abs[2]{\left\langle D \left\langle S(X)_{s,t}, \w \right\rangle, h\right\rangle_{{\cH}}}}_{L^2(\P)}
		+ \
		\abs[1]{\E\sbr[0]{\left\langle S(X)_{s,t}, \w \right\rangle}}.
		\label{eq:poincare}
	\end{equation}
	
	To estimate the first term on the RHS, note that for $h=(h^1, \ldots, h^d) \in {\cH}$, it holds by multilinearity that
	\begin{align*}
		\left\langle D \left\langle S(X)_{s,t}, \w \right\rangle, h \right\rangle_{{\cH}} &= \sum_{k=1}^n \int_s^t \dif X^{\i_1}_{r_1} \ldots \dif h^{\i_k}_{r_k}  \ldots \dif X^{\i_n}_{r_n} \\
		&\lesssim \del[1]{1+ \threebars X \threebars_{\alpha-\varepsilon,n-1}^{n-1}} (t-s)^{(n-1)(\alpha - \varepsilon)} \|h\|_{q-\var;[s,t]}
	\end{align*}
	by Young translation of rough paths (Proposition \ref{prop:YoungTr}). The induction hypothesis combined with \eqref{eq:ccyr} then yields the estimate
	\begin{equation*}
		\norm[3]{\sup_{\|h\|_{{\cH}} \leq 1} \abs[2]{\left\langle D \left\langle S(X)_{s,t}, \w \right\rangle, h\right\rangle_{{\cH}}}}_{L^2(\P)} \lesssim 
		C (t-s)^{n(\alpha-\varepsilon)}
	\end{equation*}
	
	We now consider the term $\E\sbr[0]{\left\langle S(X)_{s,t}, \w \right\rangle}$. By Wick's formula, this expectation is always zero when $n$ is odd, and (since we consider $n \leq 3$) only the case $n=2$ remains.
	Since~$X$ has independent components, the expectation is zero unless $\w=\i\i$ for some~$\i \in \llbracket 1, d \rrbracket$. 
	In that case, it is equal to $\frac{1}{2}\E[(X^i_{s,t})^2]$ which is bounded by (a constant times) $(t-s)^{2(\alpha-\varepsilon)}$ by the $n=1$ induction step, or from direct computations. 
\end{proof}

\begin{remark} \label{rem:iid}
	We emphasise that the argument to control the gradient term in~\eqref{eq:poincare} works for \emph{any}~$n \in \N$; 
	the restriction to $\alpha >\frac{1}{4}$ only comes from the need to control $\E\sbr[1]{\left\langle S(X)_{s,t}, \w \right\rangle}$ in the induction step above---which only works for $n<4$.
	However, in the case where the $X^{\i}$ are identically distributed, we can show that the result also holds for $n \leq 5$ which means we only need to assume $\alpha > \frac{1}{6}$. Indeed, it suffices to consider the case $\w=\i\i\j\j$ for~$\i \neq \j \in \llbracket \1,\d \rrbracket$, say~$\i = \1$ and~$\j = \2$ for concreteness.
	In that case, we can use the shuffle identity
	\begin{equation*}
		\1\1\2\2 =  \1\1\2 \shuffle \2 - \frac{1}{2} \1\1 \shuffle \2\2 + \frac{1}{2}(\2\2\1\1 - \1\1\2\2) + \frac{1}{2} (\2\1\2\1-\1\2\1\2) +\frac{1}{2} (\2\1\1\2-\1\2\2\1). 
	\end{equation*}
	The signature elements corresponding to the first two terms would have expectation of order $(t-s)^{4 \alpha}$ by the induction hypothesis for $n=1,2,3$ and Cauchy-Schwarz, while the last three have expectation zero by symmetry.
	
	We emphasise that, even in this case, we still crucially require the cCYR condition~\eqref{eq:ccyr}. 
	In particular, therefore, this remark does \emph{not} imply that one can construct a canonical lift of fBM with Hurst index~$H < 1/4$; in fact, condition~\eqref{eq:ccyr} is satisfied with $q := (1/2 + H)^{-1}$ and $\alpha := H$ in that case (see Corollary~\ref{coro:pl_approx} and the comments after it) and the condition~$\frac{1}{q} + \alpha > 1$ is equivalent to~$H > 1/4$.
\end{remark}

We now consider the distance between two Gaussian rough paths satisfying similar conditions as before.

\begin{theorem} \label{thm:cyrXX'}	
	Let $\widetilde{X}=(X,Y)$ be a Gaussian process in~$\R^{2d}$ with continuous sample paths such that both $X$ and~$Y$ have independent components, respectively.
	We assume that both $X$ and $Y$ satisfy the cCYR condition~\eqref{eq:ccyr} for the \emph{same}~$q \geq 1$ and $\alpha > 1/4$ with $1/q + \alpha > 1$ and the \emph{same} constant~$K$. 
	Finally, let $\widetilde{\cH}$ be the C--M space of $\widetilde{X}$ and further assume that there exists $\varepsilon>0$ s.t. for all  $\tilde{h} = (h,h') \in \widetilde{\cH}$ and for all $t < s$, the bound
	\begin{equation} \label{eq:ccyr2}
		\| h -  h'\|_{p-\var;[s,t]}  \leq \varepsilon  (t-s)^{\beta}  \| \tilde{h}\|_{\widetilde{\cH}},
	\end{equation}
	holds for some $p \geq 1$ and~$\beta \in (0, \alpha]$ such that
	\begin{equation}  \label{eq:ccyr2:cond}
		\frac{1}{p} + \alpha >1 , \;\;\;\; \frac{1}{q} + \beta >1,  \;\;\;\; \beta + 3 \alpha > 1. 
	\end{equation}
	Then, for all $\beta' < \beta$ and all $r \geq 1$, the bound
	\begin{equation}
		\norm[0]{\rho_{\beta{'}}[X,Y]}_{L^r(\P)} \leq C \varepsilon, 
	\end{equation}
	holds with a constant~$C = C(\alpha, q, K, \beta, p, \beta',r)$ that does not depend on $\varepsilon$.
\end{theorem}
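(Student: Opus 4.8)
The plan is to mirror the proof of Theorem~\ref{thm:cCYR1}, but now tracking the \emph{difference} of the two signatures and extracting a factor of $\varepsilon$ at every level. By Corollary~\ref{cor:BetaLE} and H\"older's inequality it suffices to bound the mixed distance $\|\rho_{\beta',\alpha}[X,Y]\|_{L^{r'}(\P)}$ for $r'$ large, since the multiplicative factors $\threebars X\threebars_{\alpha,M}$ and $\threebars Y\threebars_{\alpha,M}$ have all moments by Theorem~\ref{thm:cCYR1}. Because $\beta+3\alpha>1$ forces $N_{\beta,\alpha}\leq 3$, the two-process Kolmogorov lemma (Proposition~\ref{prop:Kolm}) reduces everything to pointwise moment bounds: the individual signature estimates $\|\langle S(X)_{s,t},\w\rangle\|_{L^{p/|\w|}}\lesssim (t-s)^{\alpha|\w|}$, which are exactly the content of Theorem~\ref{thm:cCYR1}, together with the difference estimate
\[
	\norm[1]{\langle S(X)_{s,t},\w\rangle - \langle S(Y)_{s,t},\w\rangle}_{L^2(\P)} \lesssim \varepsilon\,(t-s)^{\alpha(|\w|-1)+\beta}, \qquad |\w|\leq 3,
\]
which I would prove by induction on $|\w|=n\in\{1,2,3\}$, with hypercontractivity upgrading $L^2(\P)$ to $L^{p/n}(\P)$ since the difference lies in the finite chaos $\cP_n$.

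For the inductive step I set $F:=\langle S(X)_{s,t},\w\rangle-\langle S(Y)_{s,t},\w\rangle$ and apply the Poincar\'e inequality on the Wiener space of the \emph{joint} process $\widetilde X=(X,Y)$. Writing a generic direction as $\tilde h=(h,h')\in\widetilde\cH$ and using that perturbing $\widetilde X$ along $\tilde h$ perturbs $X$ by $h$ and $Y$ by $h'$, with the marginal C--M norms controlled by $\|\tilde h\|_{\widetilde\cH}$ (projection of a Cameron--Martin element onto a marginal is a contraction), multilinearity gives
\[
	\langle DF,\tilde h\rangle_{\widetilde\cH} = \sum_{k=1}^n\Bigl[\int_s^t \dif X^{\i_1}\cdots\dif h^{\i_k}\cdots\dif X^{\i_n} - \int_s^t \dif Y^{\i_1}\cdots\dif (h')^{\i_k}\cdots\dif Y^{\i_n}\Bigr].
\]
The crucial manoeuvre is to split each summand by adding and subtracting the hybrid integral $\int_s^t \dif Y^{\i_1}\cdots\dif h^{\i_k}\cdots\dif Y^{\i_n}$. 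The resulting \textbf{term (A)} keeps the \emph{same} insertion $h$ but changes the driving path from $X$ to $Y$; it is handled by the difference Young translation~\eqref{eq:YngTrDiff}, producing the lower-level distance $\rho_{\beta,\alpha,n-1}[X,Y]$ (available by induction) together with $\|h\|_{q-\var;[s,t]}\leq K(t-s)^\alpha\|\tilde h\|_{\widetilde\cH}$ from cCYR~\eqref{eq:ccyr}. \textbf{Term (B)} keeps the \emph{same} path $Y$ but replaces the insertion $h$ by $h-h'$; it is handled by the single-path Young translation~\eqref{eq:YngTr}, where the hypothesis~\eqref{eq:ccyr2} supplies precisely $\|h-h'\|_{p-\var;[s,t]}\leq\varepsilon(t-s)^\beta\|\tilde h\|_{\widetilde\cH}$, and this is where the factor $\varepsilon$ enters. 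Both terms carry the scaling $(t-s)^{\alpha(n-1)+\beta}$, and the three standing conditions~\eqref{eq:ccyr2:cond} are exactly what make \eqref{eq:YngTr} ($\tfrac1p+\alpha>1$), \eqref{eq:YngTrDiff} ($\tfrac1q+\beta>1$), and the truncation $N_{\beta,\alpha}\leq 3$ ($\beta+3\alpha>1$) applicable. Taking $L^2(\P)$ norms after the supremum over $\|\tilde h\|_{\widetilde\cH}\leq 1$, I would use Cauchy--Schwarz to separate $\rho_{\beta,\alpha,n-1}[X,Y]$ (moments $\lesssim\varepsilon$ by the induction hypothesis together with Proposition~\ref{prop:Kolm} at level $n-1$) from the rough-path norm factors (all moments finite by Theorem~\ref{thm:cCYR1}); for $n=1$ term~(A) is absent and the bound is immediate.

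It remains to treat the expectation term $\E[F]$ from the Poincar\'e inequality. For odd $n$ (i.e.\ $n=1,3$) it vanishes by Wick's formula, as in Theorem~\ref{thm:cCYR1}. The only nontrivial case is $n=2$ with $\w=\i\i$, where $\E[F]=\tfrac12\bigl(\E[(X^{\i}_{s,t})^2]-\E[(Y^{\i}_{s,t})^2]\bigr)$; I would factor this as $\tfrac12\E[(X^{\i}_{s,t}-Y^{\i}_{s,t})(X^{\i}_{s,t}+Y^{\i}_{s,t})]$ and apply Cauchy--Schwarz, bounding $\|X^{\i}_{s,t}-Y^{\i}_{s,t}\|_{L^2(\P)}\lesssim\varepsilon(t-s)^\beta$ by the base case $n=1$ and $\|X^{\i}_{s,t}+Y^{\i}_{s,t}\|_{L^2(\P)}\lesssim(t-s)^\alpha$ by Theorem~\ref{thm:cCYR1}, so that $|\E[F]|\lesssim\varepsilon(t-s)^{\alpha+\beta}$ as required.

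Finally I would assemble the pieces: for $p$ large and exponents slightly below $\alpha$ and $\beta$, the pointwise bounds feed into Proposition~\ref{prop:Kolm} to yield $\|\rho_{\beta',\alpha}[X,Y]\|_{L^{r'}(\P)}\lesssim\varepsilon$ for every $\beta'<\beta$ and suitable $r'$; Corollary~\ref{cor:BetaLE}, H\"older's inequality, and the uniform $L^s(\P)$-bounds on $\threebars X\threebars_{\alpha,M}$ and $\threebars Y\threebars_{\alpha,M}$ from Theorem~\ref{thm:cCYR1} then upgrade this to $\|\rho_{\beta'}[X,Y]\|_{L^r(\P)}\lesssim\varepsilon$. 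I expect the main obstacle to be the gradient estimate: organising the induction so that the self-referential appearance of $\rho_{\beta,\alpha,n-1}[X,Y]$ through term~(A) is genuinely a \emph{lower}-level quantity, and keeping the probabilistic factors (the rough-path norms of $X$ and $Y$) cleanly separated from the $\varepsilon$-small difference via H\"older at each stage.
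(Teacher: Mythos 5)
Your proposal is correct and follows essentially the same route as the paper's proof: reduce via Corollary~\ref{cor:BetaLE} and Proposition~\ref{prop:Kolm} to pointwise $L^2$ bounds on the signature differences for $n\leq 3$ (using $\beta+3\alpha>1$ to force $N_{\beta',\alpha}=3$), apply the Poincar\'e inequality on the joint Wiener space, split the gradient term by adding and subtracting the hybrid integral so that one piece is handled by the difference Young translation~\eqref{eq:YngTrDiff} together with cCYR and the other by~\eqref{eq:YngTr} together with~\eqref{eq:ccyr2}, and treat the $n=2$ expectation by the same Cauchy--Schwarz factorisation. The only divergence is cosmetic: your hybrid integral is driven by $Y$, which is the algebraically consistent choice, whereas the paper's displayed decomposition writes the $(h-h')$-insertion term with driving path $X$ (a harmless variant/typo).
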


\begin{proof}
	The proof has the same structure as the previous one. 
	First, note that by Theorem~\ref{thm:cCYR1}, we have that for any~$M \in \N$ the bounds
	\begin{equation*}
		\threebars X \threebars_{\alpha, M}\leq c, \quad \threebars Y \threebars_{\alpha, M} \leq c
	\end{equation*}
	hold with~$c = c(K)$.
	By Corollary \ref{cor:BetaLE} it is therefore enough to prove that
	\begin{equation}
		\norm[0]{\rho_{\beta',\alpha}[X,Y]}_{L^r(\P)} \equiv
		\norm[0]{\rho_{\beta',\alpha,N_{\beta',\alpha}}[X,Y]}_{L^r(\P)} \leq C \eps. 
		\label{thm:cyrXX':pf_distance_bd}
	\end{equation}
	Recall from Notation~\ref{not:Nalphabeta} that~$N_{\beta',\alpha}$ is the largest~$n \in \N$ such that~$\beta'  + (n-1) \alpha  \leq 1$;  by the assumption on $\beta$ in~\eqref{eq:ccyr2:cond}, we can choose $\beta'$ large enough to guarantee~$N_{\beta',\alpha} = 3$. Thus, it suffices to prove~\eqref{thm:cyrXX':pf_distance_bd} with~$N_{\beta',\alpha}$ replaced by~$n \leq 3$ and we proceed by induction on~$n$.
	By Proposition \ref{prop:Kolm}, it further suffices to prove that for a word~$\w$ of length $n$, we have
	\begin{equation} \label{eq:XX'}
		\norm[1]{\left\langle S(X)_{s,t} - S(Y)_{s,t} , \w \right\rangle}_{L^2(\P)}  \leq C \varepsilon |t-s|^{\beta  + (n-1) \alpha - \kappa} 
	\end{equation}
	for any $\kappa > 0$.
	
	Again by the \emph{Poincar\'e inequality}, it suffices to separately bound the $L^r(\P)$ norms of two terms. The first one is, for~$h \in \cH$ with $\norm[0]{h}_{\cH} \leq 1$, composed of terms of the form
	\begin{align*}
		& \int_s^t \dif X^{\i_1}_{r_1} \ldots \dif h^{\i_k}_{r_k}  \ldots \dif X^{\i_n}_{r_n} -  \int_s^t \dif Y^{\i_1}_{r_1} \ldots \dif h'^{\i_k}_{r_k}  \ldots \dif Y^{\i_n}_{r_n} \\
		%%%%%
		= & \int_s^t \dif X^{\i_1}_{r_1} \ldots \dif h^{\i_k}_{r_k}  \ldots \dif X^{\i_n}_{r_n} -  \int_s^t \dif Y^{\i_1}_{r_1} \ldots \dif h^{\i_k}_{r_k}  \ldots \dif Y^{\i_n}_{r_n} \\ 
		%%%%%
		+ &
		\int_s^t \dif X^{\i_1}_{r_1} \ldots \dif\, (h^{\i_k}_{r_k} - h'^{\i_k}_{r_k})  \ldots \dif X^{\i_n}_{r_n} \, .
	\end{align*} 
	By the Young translation bound (Proposition \ref{prop:YoungTr}), the first line is bounded by a multiple of
	\begin{equation*}
		(t-s)^{\alpha (n-2) + \beta}  \left( 1+ \threebars X \threebars_{\alpha,n-1}^{3}+ \threebars Y \threebars_{\alpha,n-1}^{3} \right)  \left\| h \right\|_{q-\var;[s,t]} \rho_{\beta,\alpha,n-1}[X,Y],
	\end{equation*}
	which, by the induction hypothesis and \eqref{eq:ccyr}, in turn has all its $L^r(\P)$ norms bounded by a multiple of $(t-s)^{\alpha(n-1) + \beta - \kappa} \eps$. The second line is estimated by
	\begin{equation*}
		\del[1]{1+ \threebars X \threebars_{\alpha,n}^{n-1}} (t-s)^{\alpha (n-1)}\norm[0]{h - h'}_{q-{\var};[s,t]}
	\end{equation*}
	which is again of the right order, in this case by~\eqref{eq:ccyr2}.
	The second term in the Poincar\'e inequality is 
	\begin{equation*}
		\E \sbr[1]{\left\langle S(X)_{s,t} - S(Y)_{s,t} , \w \right\rangle}
	\end{equation*}
	Since $n \leq 3$, as in the proof of Theorem~\ref{thm:cCYR1} this is only non-zero when $n=2$ and $\w = \i\i$ for some~$\i \in \llbracket 1,d \rrbracket$, in which case it is equal to
	\begin{align*}
		\frac{1}{2} \E\sbr[1]{(X^{\i}_{s,t})^2 - (Y^{\i}_{s,t})^2} 
		& \leq 
		\frac{1}{2} \E\sbr[1]{(X^{\i}_{s,t} - Y^{\i}_{s,t})^2}^{\frac{1}{2}}   
		\E\sbr[1]{(X^{\i}_{s,t})^2 + (Y^{\i}_{s,t})^2}^{\frac{1}{2}} \\
		& \lesssim C \eps (t-s)^{\alpha + \beta - \kappa}
	\end{align*}
	where the last estimate is due to the induction hypothesis.
\end{proof}

\begin{remark}
	By the same argument as in Remark \ref{rem:iid}, the condition in~\eqref{eq:ccyr2:cond} can be relaxed to $\beta + 5 \alpha > 1$ (from $\beta + 3 \alpha > 1$) if both $X$ and $Y$ have i.i.d. components.
	The same caveats as in Remark~\ref{rem:iid} apply.
\end{remark}

As an immediate corollary, we can estimate rough path distances in terms of suprema of expected distances. 

\begin{corollary} \label{cor:DistSup}
	Let $(X,Y)$ be a Gaussian process, where $X,Y$ satisfy \eqref{eq:ccyr} with the same parameters $\alpha > \frac{1}{4}$ and~$q \geq 1$ such that $\frac{1}{q} > 1 - \alpha$.
	Also, assume that~$X$ and~$Y$ have independent components, respectively.
	Then for any 
	$\beta'< \beta < \alpha$ with 
	\begin{equation*}
		\beta > (1-3 \alpha) \ \vee \ (1- 1/q) \ \vee \ \alpha (1-\alpha) q, 
	\end{equation*}
	it holds that 
	\begin{equation}
		\norm[1]{\rho_{\beta'}(X,Y)}_{L^r(\P)} \leq C \del[3]{\sup_{t \in [0,T]} \E \sbr[1]{\abs[0]{X_t-Y_t}}}^{1-\frac{\beta}{\alpha}}.
	\end{equation}
\end{corollary}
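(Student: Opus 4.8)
The plan is to obtain the statement as a corollary of Theorem~\ref{thm:cyrXX'}: I will construct, for a well-chosen pair $(p,\beta)$, a complementary regularity estimate of the form~\eqref{eq:ccyr2} whose prefactor $\varepsilon$ is of order $\delta^{1-\beta/\alpha}$, where I abbreviate $\delta := \sup_{t \in [0,T]} \E\sbr[0]{\abs[0]{X_t - Y_t}}$. After replacing $K$ by the larger of the two cCYR constants so that $X$ and $Y$ satisfy~\eqref{eq:ccyr} with a common $K$, the only thing left to verify is~\eqref{eq:ccyr2} together with the admissibility conditions~\eqref{eq:ccyr2:cond}; the conclusion will then be immediate.

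To build~\eqref{eq:ccyr2}, I would first fix $\tilde h = (h,h') \in \widetilde\cH$ and write $Z$ for the first-chaos element of $\widetilde X$ representing $\tilde h$, so that $h_t = \E\sbr[0]{Z X_t}$, $h'_t = \E\sbr[0]{Z Y_t}$, and $\norm[0]{\tilde h}_{\widetilde\cH} = \norm[0]{Z}_{L^2(\P)}$. Two elementary bounds are needed. Projecting $Z$ onto the first chaos of $X$, respectively $Y$, gives $h \in \cH^X$ and $h' \in \cH^Y$ (the C--M spaces of $X$ and $Y$) with $\norm[0]{h}_{\cH^X} \vee \norm[0]{h'}_{\cH^Y} \le \norm[0]{\tilde h}_{\widetilde\cH}$, whence cCYR~\eqref{eq:ccyr}, the triangle inequality for $q$-variation, and Cauchy--Schwarz over the $\d$ components produce the regularity bound $\norm[0]{h-h'}_{q-\var;[s,t]} \lesssim (t-s)^\alpha \norm[0]{\tilde h}_{\widetilde\cH}$. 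On the other hand, from $(h-h')_{u,v} = \E\sbr[0]{Z(X_{u,v} - Y_{u,v})}$, Cauchy--Schwarz, and the equivalence of $L^1$ and $L^2$ norms for the Gaussian vectors $X_t-Y_t$, I would deduce the smallness (oscillation) bound $\sup_{s \le u \le v \le t}\abs[0]{(h-h')_{u,v}} \lesssim \delta\,\norm[0]{\tilde h}_{\widetilde\cH}$.

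The decisive step is then to interpolate these two bounds through $\norm[0]{f}_{p-\var} \le \big(\sup_{u,v}\abs[0]{f_{u,v}}\big)^{1-q/p}\,\norm[0]{f}_{q-\var}^{q/p}$, valid for $p \ge q$, which yields $\norm[0]{h-h'}_{p-\var;[s,t]} \lesssim \delta^{1-q/p}(t-s)^{\alpha q/p}\norm[0]{\tilde h}_{\widetilde\cH}$. Choosing $p := \alpha q/\beta$ --- admissible since $\beta \le \alpha$ forces $p \ge q \ge 1$ --- turns this into precisely~\eqref{eq:ccyr2} with exponent $\beta$ and $\varepsilon = C\,\delta^{1-\beta/\alpha}$, because $\alpha q/p = \beta$ and $1-q/p = 1-\beta/\alpha$. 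I would close by checking~\eqref{eq:ccyr2:cond} for this $p$: the three conditions $\beta + 3\alpha > 1$, $1/q + \beta > 1$, and $1/p + \alpha > 1$ are equivalent to $\beta > 1-3\alpha$, $\beta > 1-1/q$, and $\beta > \alpha(1-\alpha)q$ respectively, which is exactly the three-fold lower bound on $\beta$ in the hypothesis. Theorem~\ref{thm:cyrXX'} then delivers $\norm[0]{\rho_{\beta'}[X,Y]}_{L^r(\P)} \le C\varepsilon$ for every $\beta' < \beta$.

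I expect the only genuine subtlety to be the single choice $p = \alpha q/\beta$: it is forced by the need to have $\alpha q/p = \beta$, and the \emph{same} $p$ must simultaneously render the interpolated pair $(p,\beta)$ admissible for Theorem~\ref{thm:cyrXX'}. This is exactly why the permissible range of $\beta$ is an intersection of three half-lines rather than a single inequality, with each constraint in~\eqref{eq:ccyr2:cond} accounting for one term of the maximum. The two ingredient bounds, by contrast, are routine Gaussian-analysis estimates.
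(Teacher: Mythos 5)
Your proposal is correct and follows essentially the same route as the paper's proof: represent $\tilde h=(h,h')$ via a first-chaos element $Z$, derive the regularity bound from cCYR and the smallness bound from $\sup_t \E\sbr[0]{\abs[0]{X_t-Y_t}}$ via Cauchy--Schwarz and Gaussian norm equivalence, interpolate between $q$-variation and the supremum to get~\eqref{eq:ccyr2} with $\varepsilon \lesssim \delta^{1-\beta/\alpha}$, and invoke Theorem~\ref{thm:cyrXX'}. Your choice $p=\alpha q/\beta$ is exactly the paper's $p=q/\theta$ with $\theta=\beta/\alpha$, and your verification that the three constraints in~\eqref{eq:ccyr2:cond} correspond to the three terms of the maximum matches the paper.
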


\begin{proof}
	First note that if  $\tilde{h} = (h,h') \in \widetilde{\cH}$, the C--M space of $\widetilde{X}$, by definition there exists a Gaussian random variable~$Z$ such that
	\begin{equation*}
		h_t = \E[Z X_t],\;\;\;h'_t = \E[Z Y_t],\;\;\;\; \norm[0]{\tilde{h}}_{\widetilde{\cH}}^2 = \E[Z^2],
	\end{equation*}
	and by Cauchy--Schwarz and the equivalence of $L^p(\P)$ norms for Gaussians, it holds that $| h_t - h'_t | \leq C \|\tilde{h}\|_{\widetilde{\cH}} \E \sbr[0]{\abs[0]{X_t - Y_t}}$.
	Also recall that for any $p = q/\theta$ with $\theta \in (0,1)$, by interpolation, for any $0 \leq s < t \leq T$ it holds that 
	\begin{align*}
		\| h -  h'\|_{p-\var;[s,t]}  &\leq C \| h -  h'\|_{q-\var;[s,t]}^{\theta} \left( \sup_{u \in [s,t]} |h_u-h'_u|\right)^{1-\theta}  \\
		& \leq  C  \norm[0]{\tilde{h}}_{\tilde{\cH}}^\theta (t-s)^{ \theta \alpha}  \norm[0]{\tilde{h}}_{\tilde{\cH}}^{1-\theta} \left(\sup_{t \in [0,T]} \E \sbr[0]{\abs[0]{X_t-Y_t}} \right)^{1-\theta}.
	\end{align*}
	where we have used the triangle inequality and~\eqref{eq:ccyr} to estimate $\norm[0]{h -  h'}_{q-\var;[s,t]}$.
	Then we apply Theorem \ref{thm:cyrXX'} with $\beta = \theta \alpha$, noting that $\frac{1}{p} + \alpha > 1$ is equivalent to $\beta >\alpha (1-\alpha) q$.
\end{proof}

\section{Applications} \label{sec_applications}

\noindent
In this section, we provide three applications of our main result.

\subsection{Convergence of approximations} \label{sec_kl_approx}

\noindent
Corollary~\ref{cor:DistSup} can be used to show convergence of standard approximations for Gaussian processes with continuous sample paths which are \emph{not} of bounded variation paths but satisfy \eqref{eq:ccyr}. 
We focus on the case of Karhunen--Loeve (KL) type and piecewise-linear (PL) approximations. For KL approximations, recall that for any orthonormal basis $(e_k)_{k \geq 0}$ of~$\cH$, there exist an i.i.d. sequence $(\gamma_k)_{k \geq 0}$ of standard Gaussians, s.t. we have
\begin{equation}
	X = \sum_{k \geq 0} \gamma_k e_k 
	\quad \text{in} \quad C([0,T],\R^d)
	\ \text{a.s. and in}~L^p(\P), \ p \in [1,\infty), \quad 
	\label{eq:kl_conv}
\end{equation}
see~\cite[App.~D.3]{friz-victoir-book}.
For~$N \in \N$, we set~$\mathcal{F}^N = \sigma(\gamma_k: k \leq N)$ and consider the approximation
\begin{equation}
	X^N := \E\sbr[0]{X \sVert[0] \mathcal{F}^N} = \sum_{k = 0}^N \gamma_k e_k
	\label{eq:kl_approx_N}
\end{equation}
For PL approximations, given a partition $D= \{ t_i \}$ of $[0,T]$, let $X^D$ be the piecewise-linear function which is equal to $X$ on grid-points and linearly interpolated in between, i.e.:
\begin{equation} \label{eq:defXD}
	X^D(t) = 
	X(t_i) + 
	\frac{t-t_i}{t_{i+1}-t_i} X_{t_i,t_{i+1}}, \;\;\; \text{for} \quad X_{t_i,t_{i+1}} := X(t_{i+1}) - X(t_i), \quad t \in [t_{i},t_{i+1}]. 
\end{equation}
We let $\abs[0]{D} := \sup_{i} \abs[0]{t_i - t_{i+1}}$ and then have the following convergence result.
\begin{corollary}
	Let $X$ be a Gaussian process with continuous sample paths satisfying \eqref{eq:ccyr}. Then there exists a rough path lift of $X$ such that, for any $\beta< \alpha$, it holds that
	\begin{equation*}
		\lim_{N \to \infty} \norm[0]{\rho_{\beta}(X,X^N)}_{L^r(\P)} = 0, \quad
		\lim_{|D| \to 0} \norm[0]{\rho_{\beta}(X,X^D)}_{L^r(\P)} = 0.
	\end{equation*}
\end{corollary}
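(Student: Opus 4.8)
The plan is to deduce both statements from Corollary~\ref{cor:DistSup} by taking $Y = X^N$ and $Y = X^D$, respectively, the rough path lift of $X$ being throughout the canonical one produced by Theorem~\ref{thm:cCYR1}. Fix a target exponent $\beta < \alpha$. Since $X$ satisfies \eqref{eq:ccyr}, we have $\alpha > 1/4$ and $1/q + \alpha > 1$, and a one-line computation shows that each of $1 - 3\alpha$, $1 - 1/q$, and $\alpha(1-\alpha)q$ is then strictly below $\alpha$; hence one may fix an intermediate $\bar\beta \in (\beta, \alpha)$ meeting the lower bound required in Corollary~\ref{cor:DistSup}. Applying that corollary with its $\beta'$ equal to our $\beta$ and its $\beta$ equal to $\bar\beta$ yields
\begin{equation*}
	\norm[1]{\rho_\beta(X, Y)}_{L^r(\P)} \leq C \del[3]{\sup_{t \in [0,T]} \E\sbr[1]{\abs[0]{X_t - Y_t}}}^{1 - \frac{\bar\beta}{\alpha}},
\end{equation*}
with $C$ independent of $N$ and $D$ (it depends only on $\alpha, q, K, \bar\beta, p, \beta, r$). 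It therefore remains to verify the hypotheses of Corollary~\ref{cor:DistSup} for the pairs $(X, X^N)$ and $(X, X^D)$ and to check that the right-hand side tends to $0$.

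For the \emph{KL case} I would choose the orthonormal basis $(e_k)$ in \eqref{eq:kl_conv} so that each $e_k$ lies in a single summand $\cH^\i$ of $\cH = \oplus_{\i=1}^\d \cH^\i$; this makes the components of the approximation $X^N$ in \eqref{eq:kl_approx_N} independent. The Cameron--Martin space of $X^{N,\i}$ is then exactly the span of the relevant $e_k$ with the norm inherited from $\cH^\i$, so \eqref{eq:ccyr} for $X^N$ holds verbatim, with the \emph{same} constant $K$. Convergence of the right-hand side is immediate from \eqref{eq:kl_conv}: since $X^N \to X$ in $L^1(\P; C([0,T];\R^\d))$, one has $\sup_t \E\abs[0]{X_t - X^N_t} \leq \E\norm[0]{X - X^N}_\infty \to 0$.

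The \emph{PL case} is where I expect the main obstacle, the crux being to establish \eqref{eq:ccyr} for $X^D$ \emph{uniformly} in the partition $D$. Writing $L_D$ for piecewise-linear interpolation at the grid $D$, so that $X^D = L_D X$ acts componentwise, I would first identify the Cameron--Martin space of $X^{D,\i}$: its first chaos is the span of the grid values of $X^\i$, so every element is of the form $L_D h$ for some $h \in \cH^\i$ sharing the same first-chaos representer, whence the exact identity $\norm[0]{L_D h}_{\cH^{D,\i}} = \norm[0]{h}_{\cH^\i}$. This reduces the task to the deterministic estimate $\norm[0]{L_D h}_{q-\var;[s,t]} \leq C(K,\alpha)\,(t-s)^\alpha \norm[0]{h}_{\cH^\i}$ with constant independent of $D$. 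I would prove it by using that a piecewise-linear path attains its $q$-variation at its breakpoints (so refining linear segments is harmless for $q \geq 1$), bounding the two partial end-intervals by the $h$-increments over the adjacent grid cells, and then splitting into $t - s \geq \abs[0]{D}$ (the enclosing grid interval has length $\leq 3(t-s)$, so \eqref{eq:ccyr} for $X$ applies on it) and $t - s < \abs[0]{D}$ (at most one breakpoint lies in $[s,t]$, and one uses $t_{i+1}-t_i \geq t-s$ together with $\alpha - 1 < 0$). Taking the common constant $\max\{K, C(K,\alpha)\}$ lets $X$ and $X^D$ share the same parameters.

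Finally, independence of the components of $X^D$ is clear since $X^{D,\i}$ is built from the grid values of $X^\i$ alone, and $\sup_t \E\abs[0]{X_t - X^D_t} \leq \E\norm[0]{X - X^D}_\infty \to 0$ as $\abs[0]{D} \to 0$ follows from uniform continuity of the (continuous) sample paths together with dominated convergence, Fernique's theorem supplying the integrable majorant $\norm[0]{X - X^D}_\infty \leq 2\norm[0]{X}_\infty$. Substituting these two facts into the displayed bound gives both limits, completing the argument.
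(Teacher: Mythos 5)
Your proposal follows the same overall route as the paper: reduce both statements to Corollary~\ref{cor:DistSup} by verifying (i) convergence of $\sup_t \E\abs[0]{X_t - Y_t}$ and (ii) the cCYR condition \eqref{eq:ccyr} with a constant uniform in $N$ resp.\ $D$. Your check of the exponent thresholds, your treatment of (i), and your observation that the KL basis should be chosen compatibly with $\cH = \oplus_{\i}\cH^{\i}$ so that $X^N$ retains independent components (a point the paper glosses over) are all fine. One structural difference: the paper applies Corollary~\ref{cor:DistSup} to \emph{pairs of approximations} $(X^N,X^{N'})$ and $(X^D,X^{D'})$ and concludes via a Cauchy argument. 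This is not merely cosmetic \dash these pairs consist of bounded-variation paths for which every quantity is classically defined, and the limit of the Cauchy sequence is precisely what furnishes the lift whose existence the corollary asserts. Your direct comparison of $X$ with $X^N$ presupposes that the canonical lift of $X$, and hence the meaning of $\rho_\beta(X,X^N)$, is already supplied by Theorem~\ref{thm:cCYR1}, whose own extension to non-smooth $X$ is ``by approximation'', i.e.\ essentially the content of the present corollary; the Cauchy formulation avoids this mild circularity.

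There is one genuine error in your verification of (ii) for the PL case: $t-s < \abs[0]{D} = \max_i (t_{i+1}-t_i)$ does \emph{not} imply that $[s,t]$ contains at most one breakpoint, since short cells may cluster. The correct dichotomy is whether $[s,t]$ lies inside a single cell (where the affine rescaling together with $\alpha - 1 < 0$ and $t_{i+1}-t_i \geq t-s$ gives the bound) or straddles several cells (where one separates the two boundary cells from the interior block $[t_{i+1},t_j] \subseteq [s,t]$, on which \eqref{eq:ccyr} applies to $h$ directly); this is exactly what the control function $\omega^D$ from \cite[Prop.~5.20]{friz-victoir-book}, invoked by the paper, encodes. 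Your argument is repairable but incorrect as written. A smaller caveat: the identity $\norm[0]{L_D h}_{\cH^{D,\i}} = \norm[0]{h}_{\cH^{\i}}$ holds only for the minimal-norm preimage, whose first-chaos representer lies in the closed span of the grid values; in general the $\cH^D$-norm is an infimum over preimages, although this does not affect the direction of inequality you need.
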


\begin{proof}
	We show that both $X^N$ and $X^D$ form Cauchy sequences (in $L^r(\P)-\rho_{\beta}$ distances). By Corollary \ref{cor:DistSup}, it suffices to check that: (i) These processes converge in supremum expected distance. (ii) They satisfy the cCYR assumption \eqref{eq:ccyr} with a uniform constant.
	
	Regarding (i): Since \eqref{eq:ccyr} implies that $\E\sbr[0]{\abs[0]{X_t-X_s}} \leq C |t-s|^{\alpha}$, it follows that
	\begin{equation*}
		\sup_{t \in [0,T]} \E \sbr[1]{\abs[0]{X^D_t - X^{D'}_t}} \leq C \del[1]{\abs[0]{D}^\alpha \vee \abs[0]{D'}^{\alpha}} \to_{\abs[0]{D},\abs[0]{D'} \to 0} 0. 
	\end{equation*}
	Since $X^N$ is given by a conditional expectation, the statement for KL approximations immediately follows from~\eqref{eq:kl_conv}.
	
	Regarding (ii): The case of KL approximations is obvious, since their C--M spaces are $\cH^N = \operatorname{span}(e_k: k \leq N) \subset \cH$. For PL approximations, note that their C--M spaces are 
	\begin{equation*}
		\cH^D = \cbr[0]{h^D: \;\;h \in \cH} \quad \text{with} \quad \norm[0]{\tilde{h}}_{\cH^D} = \inf_{h \in \cH: \, h^D = \tilde{h}} \norm[0]{h}_{\cH},	
	\end{equation*}
	where, as in~\eqref{eq:defXD}, $h^D$ denotes the PL approximation of~$h$ on $D$. For $h \in \cH$, we let $\omega(s,t) := \norm[0]{h}_{q-\var;[s,t]}^q \leq C (t-s)^{q \alpha} \| h\|_{\cH}^q$. From the analysis in \cite[Prop.~5.20]{friz-victoir-book} , we know that for any $s<t$, we have
	\[ \|h^D \|_{q-\var;[s,t]} \leq \omega^D(s,t)^{1/q}, \]
	where, letting $D = \{t_i\}$, we have set
	\begin{equation*}
		\omega^D(s,t) := 
		\begin{cases}
			\left(\frac{t-s}{t_{i+1}-t_i}\right)^{q} \omega(t_i,t_{i+1}) & \text{if} \quad t_i \leq s \leq t \leq t_{i+1}, \\
			\omega^D(s,t_i) + \omega^D(t_i,t_j) + \omega^D(t_j,t) & \text{if} \quad s \leq t_i \leq t_j \leq t.
		\end{cases}
	\end{equation*}
	It is then immediate to check that this satisfies the bound
	\begin{equation*}
		\omega^D(s,t)  \leq C (t-s)^{\alpha p} \|h \|_{\cH}^p
	\end{equation*}
	for a uniform constant $C$.
\end{proof} 

It would be easy to derive convergence rates in the above corollary but they would typically be suboptimal. In the case where $\cH$ embeds into a suitable Besov space, a slightly refined analysis giving optimal rates is detailed in the next section.

\subsection{Convergence rates for PL approximations} \label{sec_pl_approx}

\noindent
Recall the definition of the Besov seminorm on~$[0,T]$ for $p \in [1,\infty)$ and $\delta \in (0,1)$:
\begin{equation*}
	\left\| f \right\|^p_{B^\delta_{p}([0,T])} := \int_{0 \leq s \leq t \leq T} \frac{ |f_t - f_s|^p}{|t-s|^{p \delta + 1}} ds \; dt.
\end{equation*}
An important case in which we can verify \eqref{eq:ccyr} is given by Friz--Victoir's Besov--variation embedding~\cite{FV06} which states that for fixed $\alpha >0$ and $p \in [1,\infty)$ with $\alpha + \frac{1}{p} \in (0,1)$, there exists $C>0$ such that, for any $s<t$, we have
\begin{equation}
	\left\| f \right\|_{(1/p+\alpha)^{-1}-\var,[s,t]} \leq C  (t-s)^{\alpha}\left\| f \right\|_{B^{1/p+\alpha}_{p}([s,t];\R^d)}.
	\label{eq:friz_victoir_embedding}
\end{equation}
It also turns out that in this case, we can obtain better estimates on convergence rates of (for instance) PL approximations. We start with the following approximation lemma.

\begin{lemma} \label{lem:BesovPart}
	For any $0< r '< r< 1$, there exists $C>0$ such that for any partition~$D$ of~$[0,T]$ and $f \in B^{r}_{p}([0,T])$, we have
	\begin{equation*}
		\norm[0]{f -f^{D}}_{B^{r'}_{p}([0,T])}\leq  C |D|^{r-r'} \left\|f \right\|_{B^{r}_{p}([0,T])}.
	\end{equation*}
\end{lemma}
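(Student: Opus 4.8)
The plan is to prove the estimate by a local-to-global argument that exploits the cancellation built into the piecewise-linear interpolant. Write $D = \{0 = t_0 < \cdots < t_m = T\}$, denote the cells by $I_i := [t_i,t_{i+1}]$ with lengths $\ell_i := t_{i+1}-t_i \le |D|$, and set $g := f - f^D$. The structural facts I would rely on are that $g$ vanishes at every grid point, $g_{t_i}=0$, and that on each cell it is an increment-weighted average of $f$, namely
\[
	g_u = \tfrac{t_{i+1}-u}{\ell_i}(f_u - f_{t_i}) + \tfrac{u-t_i}{\ell_i}(f_u - f_{t_{i+1}}), \qquad u \in I_i,
\]
so that in particular $|g_u| \le \operatorname{osc}_{I_i}(f)$. (Throughout I assume $f$ continuous, i.e. $r>1/p$, which is needed for $f^D$ to be defined and holds in all the applications.) I would then split the Gagliardo double integral defining $\norm[0]{g}^p_{B^{r'}_{p}([0,T])}$ into a same-cell (diagonal) part and a cross-cell (off-diagonal) part, estimating each by $|D|^{p(r-r')}\norm[0]{f}^p_{B^{r}_{p}([0,T])}$.

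\emph{Step 1 (diagonal part).} First I would prove the single-cell bound $\norm[0]{g}_{B^{r'}_{p}(I_i)} \le C\,\ell_i^{\,r-r'}\norm[0]{f}_{B^{r}_{p}(I_i)}$ and sum over $i$. Rescaling $I_i$ to $[0,1]$, under which the $B^{\delta}_{p}$-seminorm scales like $\ell^{1/p-\delta}$, reduces this to the scale-invariant inequality $\norm[0]{\tilde g}_{B^{r'}_{p}([0,1])} \le C\,\norm[0]{\tilde f}_{B^{r}_{p}([0,1])}$, where $\tilde g = \tilde f - \tilde f^{\{0,1\}}$ is $\tilde f$ minus its affine interpolant. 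This splits as $\norm[0]{\tilde f}_{B^{r'}_{p}} + \norm[0]{\tilde f^{\{0,1\}}}_{B^{r'}_{p}}$: the first is $\le \norm[0]{\tilde f}_{B^{r}_{p}}$ since $|t-s|\le 1$ makes the lower-order seminorm dominated by the higher one, and the second equals $|\tilde f(1)-\tilde f(0)|\cdot\norm[0]{\mathrm{id}}_{B^{r'}_{p}([0,1])}$ with $|\tilde f(1)-\tilde f(0)| \lesssim \norm[0]{\tilde f}_{B^{r}_{p}}$ by the Besov--H\"older embedding (valid as $r>1/p$). Since the sets $I_i^2$ are disjoint in $[0,T]^2$, the same-cell integrals sum to at most the full seminorm of $f$, and I obtain $\sum_i \norm[0]{g}^p_{B^{r'}_{p}(I_i)} \lesssim |D|^{p(r-r')}\norm[0]{f}^p_{B^{r}_{p}([0,T])}$.

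\emph{Step 2 (cross-cell part).} For $s\in I_i$, $t\in I_j$ with $i<j$, I would use $|g_t-g_s|^p \lesssim |g_t|^p + |g_s|^p$ (legitimate because $g$ vanishes on the grid) and integrate out the variable in the other family of cells against the singular kernel: for $t\in I_j$,
\[
	\sum_{i<j}\int_{I_i}\frac{\dif s}{(t-s)^{pr'+1}} = \int_0^{t_j}\frac{\dif s}{(t-s)^{pr'+1}} \lesssim (t-t_j)^{-pr'}.
\]
This reduces the cross-cell part to controlling $\sum_j \int_{I_j}(t-t_j)^{-pr'}|g_t|^p\,\dif t$ together with the symmetric sum measuring distances to right endpoints. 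Here a dichotomy appears. When $pr'>1$, the fractional Hardy inequality (using $g_{t_j}=0$) gives $\int_{I_j}(t-t_j)^{-pr'}|g_t|^p\,\dif t \lesssim \norm[0]{g}^p_{B^{r'}_{p}(I_j)}$, returning me to the single-cell bound of Step 1. When $pr'<1$, the kernel is integrable, and the pointwise bound $|g_t|\le \operatorname{osc}_{I_j}(f)$ together with the scaled embedding $\operatorname{osc}_{I_j}(f)^p \lesssim \ell_j^{\,pr-1}\norm[0]{f}^p_{B^{r}_{p}(I_j)}$ yields the same conclusion. Summing over $j$ produces $|D|^{p(r-r')}\norm[0]{f}^p_{B^{r}_{p}([0,T])}$ once more.

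\emph{Main obstacle.} The genuine difficulty is the cross-cell part, and within it the borderline exponent $pr'=1$, where neither the fractional Hardy inequality (which requires $pr'>1$) nor the crude pointwise/integrability bound (which requires $pr'<1$) applies; a jump-type test function shows both sides diverge logarithmically there. I expect to treat this single value separately via the critical (logarithmic) Hardy inequality, the only delicate point of the argument; everything else is routine scaling and kernel bookkeeping.
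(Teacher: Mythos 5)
Your decomposition is genuinely different from the paper's. You split the Gagliardo integral according to cell membership (same cell vs.\ different cells) and then rely on rescaling plus fractional Hardy inequalities, whereas the paper splits according to whether $|t-s|\le |D|$ or $|t-s|>|D|$ and never needs a Hardy inequality: it only uses the Garsia--Rodemich--Rumsey pointwise bound $|f_{s,t}|\le \omega_f(s,t)^{1/p}|t-s|^{r-1/p}$ (and its analogue for $f^D$ via a modified control $\omega_f^D$), together with the elementary superadditivity estimate $\int_0^{T-h}\omega(t,t+h)\,\dif t\le \omega(0,T)h$. In the far regime the paper inserts the two grid points nearest to $s$ and $t$, so that the middle increment of $f-f^D$ cancels exactly and only two boundary pieces of length $\le|D|$ survive; no exponent dichotomy ever appears. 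Your Step~1 (rescaling each cell to $[0,1]$ and using disjointness of the squares $I_i^2$) is correct as written.

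The genuine gap is the one you flag yourself: the borderline $pr'=1$ in the cross-cell part. Your proposed repair via the ``critical (logarithmic) Hardy inequality'' will not give the clean rate: at $\sigma p=1$ the fractional Hardy inequality genuinely fails (even for smooth compactly supported functions) and its valid critical substitutes carry a logarithmic weight on the left-hand side, which would pollute the rate $|D|^{r-r'}$ with a logarithm or force you to give up an $\varepsilon$ of exponent. As the proof stands, one value of $r'$ is therefore not covered. The good news is that the dichotomy is an artifact of the crude bound $|g_t|\le\operatorname{osc}_{I_j}(f)$: since $g_{t_j}=0$, you in fact have the \emph{quantitative} vanishing
\begin{equation*}
	|g_t| \;\lesssim\; \omega_f(t_j,t_{j+1})^{1/p}\Bigl((t-t_j)^{r-1/p}+\tfrac{t-t_j}{\ell_j}\,\ell_j^{\,r-1/p}\Bigr),
	\qquad t\in I_j,
\end{equation*}
coming from GRR applied to $f$ on $[t_j,t]$ plus the explicit linear term. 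Feeding this into $\int_{I_j}(t-t_j)^{-pr'}|g_t|^p\,\dif t$ gives $\lesssim \omega_f(t_j,t_{j+1})\,\ell_j^{\,p(r-r')}$ by direct integration for \emph{every} $r'<r$ (the exponents $pr-pr'-1$ and $p-pr'$ both exceed $-1$), with no Hardy inequality at any exponent. With that substitution your argument closes; without it, the lemma is not proved at $pr'=1$. I would also note that even in the range $pr'>1$, invoking the one-sided fractional Hardy inequality for a function that merely vanishes pointwise at the endpoint deserves a precise citation or a density argument, which the direct bound above renders unnecessary.
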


\begin{proof}
	Fix $f \in B^{r}_{p}([0,T])$ and a partition $D=\{t_i\}$ of~$[0,T]$ with $\abs[0]{D} = \varepsilon$.
	Following \cite{FV06}, we note that an application of the Garsia--Rodemich--Rumsey lemma yields
	\begin{equation}
		|f_{s,t}| \leq \omega_f(s,t)^{\frac{1}{p}} |t-s|^{r-\frac{1}{p}} 
		\label{eq:grr_lemma}
	\end{equation}
	where $\omega_f(s,t) = C \| f \|_{\cB^{r}_{p}([s,t])}^p$ is a control function.
	In addition, it also holds that there exists a control $\omega_f^D(s,t)$ which satisfies $\omega_f^D(0,T) \leq \omega_f(0,T)$ and such that
	\begin{equation}
		|f^D_{s,t}| \leq 3^{1-1/p} \omega_f^D(s,t)^{\frac{1}{p}} |t-s|^{r-\frac{1}{p}}. 
		\label{eq:grr_lemma_2}
	\end{equation}
	%]
	To see this, we follow the proof of \cite[Prop. 5.20]{friz-victoir-book}. If $t_i \leq s \leq t \leq t_{i+1}$,we define
	\[ 
	\omega_f^D(s,t) = \left( \frac{t-s}{t_{i+1}-t_i} \right)^{p(1-r) +1} \omega_f(t_i,t_{i+1})  	\]
	and if $t_i \leq s \leq t_{i+1} \leq t_j \leq t$, we let
	\[
	\omega_f^D(s,t) = \omega_f^D(s,t_{i+1}) + \omega_f(t_{i+1}, t_j) +  \omega_f^D(t_j,t).
	\]
	The fact that $\omega_f^D$ is a control function is easily checked, using that $\omega_f$ is one and $p(1-r) +1 \geq 1$, and \eqref{eq:grr_lemma_2} follows from \eqref{eq:grr_lemma} and the definition of $f^D$.
	We will also use the elementary fact that, for any control function $\omega$, we have
	\begin{equation}
		\int_0^{T-h} \omega(t,t+h) \dif t  \leq  \omega(0,T) h.
		\label{eq:elemtary_control_estimate}
	\end{equation} 
	Indeed: first extend $\omega$ to $s,t \in [0,T+h]$ by letting $\omega(s,t)=\omega(s\wedge T, t \wedge T)$ (this preserves superadditivity). Then, letting $N= \lceil \frac{T}{h} \rceil$, it holds that, for all $0 \leq t \leq h$,
	\[
	\omega(t,t+h) +\omega(t+h,t+2h) \ldots + \omega(t+(N-1)h, t+ Nh) \leq \omega(0,T+h) = \omega(0,T)\]
	and \eqref{eq:elemtary_control_estimate} follows since
	\[
	\int_0^{T-h} \omega(t,t+h) \dif t \leq \sum_{j=0}^{N-1} \int_0^h  \omega(t+jh,t+(j+1)h) \dif t.
	\]

	We want to estimate
	\begin{equation*}
		\norm[0]{f-f^D}^p_{B^{r'}_{p,p}([0,T])} = \int_{0 \leq s \leq t \leq T} \frac{ |f_{s,t} - f^D_{s,t}|^p}{|t-s|^{p r' + 1}} \dif s \; \dif t, \quad f_{s,t} = f_t - f_s,
	\end{equation*}
	which we will achieve by splitting the integral depending on whether $|t-s|$ is smaller or greater than $\varepsilon$.
	
	In the first case, we find 
	\begin{align*} 
		&
		\int_0^{\eps} \int_0^{T-h} \abs[0]{f_{t,t+h} - f^D_{t,t+h}}^{p} \dif t \, \frac{1}{h^{pr'+1}} \dif h \notag\\
		\lesssim \ 
		& \int_0^{\eps} \int_0^{T-h} \del[1]{\omega_f(t,t+h) + \omega_f^D(t,t+h)} \dif t  \, \frac{h^{rp - 1} }{h^{pr'+1}} \dif h \label{lem:BesovPart:pf_first_part} \\
		\lesssim \ 
		& \omega_f(0,T) \int_0^{\eps} h^{(r-r') p - 1} \dif h 
		\, \lesssim \, \omega_f(0,T) \varepsilon^{p(r-r')}. \notag
	\end{align*}
	where we have used~\eqref{eq:grr_lemma} and~\eqref{eq:grr_lemma_2} in the first and~\eqref{eq:elemtary_control_estimate} in the second estimate.
	
	For the second case, recall that~$\abs[0]{D} = \eps$. 
	Next, note that if $s, t \in [0,T]$ with~$s \leq t$ and~$\abs[0]{t - s} > \eps$, there are~$t_i, t_j \in D$ such that~$t_i$ (resp.~$t_j$) is at distance smaller than $\eps$ from $s$ (resp. $t$) and s.t. $s \leq t_i \leq t_j \leq t$.
	Using that~$f_r^D = f_r$ for any~$r \in D$, we then find
	\begin{align*}
		|f_{s,t} - f^D_{s,t}| &=  |f_{s,t_i} - f^D_{s,t_i} + f_{t_j,t} - f^D_{t_j,t} | \\
		& \leq \varepsilon^{r - \frac{1}{p}} \del[1]{\omega_f(s,s+\varepsilon)^{\frac{1}{p}} + \omega_f^D(s,s+\varepsilon)^{\frac{1}{p}} + \omega_f(t-\varepsilon,t)^{\frac{1}{p}} + \omega_f^D(t-\varepsilon,t)^{\frac{1}{p}}}.
	\end{align*}
	where we have again used~\eqref{eq:grr_lemma} and~\eqref{eq:grr_lemma_2} in the estimate.
	Integrating and~\eqref{eq:elemtary_control_estimate} gives
	\begin{equation*}
		\int_{|s-t| \geq \varepsilon} \frac{ |f_{s,t} - f^D_{s,t}|^p}{|t-s|^{p r' + 1}} \dif s \dif t \lesssim \omega_f(0,T) \varepsilon^{rp} \int_{ \varepsilon}^T h^{-r'p - 1}  \dif h \lesssim \omega_f(0,T) \varepsilon^{p(r-r')}
	\end{equation*}
	which concludes the proof.
\end{proof}	

This gives the following result on convergence of PL approximations.

\begin{corollary} \label{coro:pl_approx}
	Let $X$ be a Gaussian process with independent components and con\-ti\-nuous sample paths, and assume that its C--M space satisfies
	\begin{equation*}
		\cH \subset B_p^{1/p + \alpha}, \quad \alpha > \frac{1}{4}, \quad 2 \alpha + \frac{1}{p} > 1.
	\end{equation*} 
	Then $X$ admits a canonical $\alpha^-$ rough path lift, and for any $\beta' < \beta \leq \alpha$ with $\alpha + \beta + \frac{1}{p} > 1$, $\beta + 3 \alpha > 1$, and any $r \geq 1$, it holds that, for some $C>0$, we have
	\begin{equation*}
		\norm[0]{\rho_{\beta'}(X,X^D)}_{L^r(\P)} \leq   C |D|^{\alpha-\beta}. 
	\end{equation*}
\end{corollary}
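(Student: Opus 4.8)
The plan is to apply Theorem~\ref{thm:cyrXX'} to the pair $\widetilde X = (X, X^D)$ and to read off the rate $\abs[0]{D}^{\alpha-\beta}$ from the constant $\varepsilon$ appearing in~\eqref{eq:ccyr2}; the two main inputs are the Friz--Victoir embedding~\eqref{eq:friz_victoir_embedding} and the approximation Lemma~\ref{lem:BesovPart}.

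First I would dispatch the cCYR hypotheses and the existence of the lift. Since the Besov seminorm is defined only for smoothness in $(0,1)$, the assumption forces $\tfrac1p+\alpha<1$, so~\eqref{eq:friz_victoir_embedding} applies at exponent~$\alpha$: for each component, $\cH^\i\subset\cH\subset B^{1/p+\alpha}_p$ together with the monotonicity of the Besov seminorm under restriction to $[s,t]\subset[0,T]$ yields $\norm[0]{h}_{q-\var;[s,t]}\le C(t-s)^\alpha\norm[0]{h}_{\cH^\i}$ with $q:=(1/p+\alpha)^{-1}$. This is~\eqref{eq:ccyr}, and $\tfrac1q+\alpha=\tfrac1p+2\alpha>1$ by hypothesis, so Theorem~\ref{thm:cCYR1} provides the canonical $\alpha^-$ lift. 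The same bound holds for $X^D$ with a constant uniform in $D$ --- this is precisely the uniform cCYR estimate for piecewise-linear approximations established in the proof of the preceding convergence corollary --- and both $X$ and $X^D$ have independent components.

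The crux is the difference condition~\eqref{eq:ccyr2}. The key structural point is that $X^D$ is obtained from $X$ by a deterministic linear map, so the first Wiener chaos of $\widetilde X$ coincides with that of $X$; hence $\widetilde{\cH}=\{(h,h^D):h\in\cH\}$ and the map $h\mapsto(h,h^D)$ is an isometry, $\norm[0]{(h,h^D)}_{\widetilde{\cH}}=\norm[0]{h}_\cH$. Thus~\eqref{eq:ccyr2} reduces to an estimate on $h-h^D$, which I would obtain by a two-step mechanism: apply~\eqref{eq:friz_victoir_embedding} at exponent~$\beta$ (valid since $\beta+\tfrac1p\le\alpha+\tfrac1p<1$) with variation exponent $\tilde p:=(1/p+\beta)^{-1}$ to produce the factor $(t-s)^\beta$, and then bound the resulting Besov seminorm on $[s,t]$ by its value on $[0,T]$ and apply Lemma~\ref{lem:BesovPart} with $r=1/p+\alpha$, $r'=1/p+\beta$ (so $r-r'=\alpha-\beta$). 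This gives, for all $s<t$,
\[
\norm[0]{h-h^D}_{\tilde p-\var;[s,t]}\le C(t-s)^\beta\norm[0]{h-h^D}_{B^{1/p+\beta}_p([0,T])}\le C\abs[0]{D}^{\alpha-\beta}(t-s)^\beta\norm[0]{h}_\cH,
\]
which is exactly~\eqref{eq:ccyr2} with exponent $\beta$, variation exponent $\tilde p$, and $\varepsilon=C\abs[0]{D}^{\alpha-\beta}$.

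It then remains to verify the compatibility conditions~\eqref{eq:ccyr2:cond}, which match the hypotheses verbatim: $\tfrac1{\tilde p}+\alpha=\tfrac1p+\alpha+\beta>1$, $\tfrac1q+\beta=\tfrac1p+\alpha+\beta>1$, and $\beta+3\alpha>1$. Theorem~\ref{thm:cyrXX'} then delivers $\norm[0]{\rho_{\beta'}(X,X^D)}_{L^r(\P)}\le C\varepsilon=C\abs[0]{D}^{\alpha-\beta}$ for every $\beta'<\beta$ and $r\ge1$, with $C$ independent of $D$; the endpoint $\beta=\alpha$ follows by running the argument with any $\tilde\beta\in(\beta',\alpha)$, whose admissibility is preserved by continuity. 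I expect the main obstacle to be the interplay in the displayed estimate: one must resist localising the Besov norm (which would spoil the $(t-s)$-scaling of $h^D$) and instead let~\eqref{eq:friz_victoir_embedding} generate the $(t-s)^\beta$ factor while Lemma~\ref{lem:BesovPart} is applied globally; coupling this pathwise bound to the Gaussian comparison theorem then hinges on the isometric identification of $\widetilde{\cH}$, after which the parameter bookkeeping is routine.
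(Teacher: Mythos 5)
Your proposal is correct and follows the same route as the paper: verify cCYR via the Friz--Victoir embedding at exponent $\alpha$, obtain the difference condition~\eqref{eq:ccyr2} with $\varepsilon = C\abs[0]{D}^{\alpha-\beta}$ by combining the embedding at exponent $\beta$ with Lemma~\ref{lem:BesovPart}, and conclude by Theorem~\ref{thm:cyrXX'}. You merely spell out more details than the paper does (the identification $\widetilde{\cH}=\{(h,h^D):h\in\cH\}$, the uniform cCYR constant for $X^D$, and the parameter checks in~\eqref{eq:ccyr2:cond}), all of which are accurate.
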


\begin{proof}
	Lemma \ref{lem:BesovPart} combined with Friz--Victoir variation embedding~\eqref{eq:friz_victoir_embedding} gives
	\begin{equation*}
		\norm[1]{f - f^D}_{(1/p+\beta)^{-1}-\var,[s,t]} 
		\leq C  \abs[0]{D}^{\alpha-\beta} (t-s)^{\beta}\left\| f \right\|_{B^{1/p+\alpha}_{p}([s,t])} \leq C' \abs[0]{D}^{\alpha-\beta} (t-s)^{\beta}\left\| f\right\|_{\cH},
	\end{equation*}
	so that the result is a direct application of Theorem \ref{thm:cyrXX'}.
\end{proof}

Note that the highest convergence rate (in the lowest regularity) is obtained by letting $\beta \to 1-1/p - \alpha$, with a rate of $2 \alpha - 1 + 1/p$.

The important case of $p=2$ corresponds to C--M spaces of fractional Brownian motions, in which case we recover the optimal rate of $2 \alpha - 1/2$ from Friz-Riedel \cite{FR14}.

\subsection{Rough paths bounds for random Fourier series} \label{sec_random_fourier}

\noindent
Similar considerations as in the last subsection also allow to get rough path bounds on random Fourier series with minimal assumptions, which we now detail. Let $(e_k)_{ k \geq 1}$ be the usual trigonometric basis of functions on $[0, 2\pi]$, defined by
\[  e_{2k}(t) = \cos( k t), \;\; e_{2k+1}(t) = \sin(k t ), \;\; k \geq 1, \;\; t \in [0,2\pi] \]

We will use the fact that Besov norms of Fourier series admit simple expressions in terms of the coefficients when~$p = 2$ since~$B_2^\alpha([0,2\pi]) \simeq H^\alpha([0,2\pi])$.
More precisely, for
\begin{equation*}
	f = \sum_{k \geq 1} f_k e_k
\end{equation*}
with $(f_k)_{k \geq 1} \in \ell^2$, it holds that for some $C>0$, we have the estimate
\begin{equation} \label{eq:BesovFourier}
	C^{-1} \sum_{k \geq 1} f_k^2 k^{2 \alpha}
	\leq  \norm[0]{f}_{\cB_2^{\alpha}([0,2\pi])}^2 
	\leq C \sum_{k \geq 1} f_k^2 k^{2 \alpha}
\end{equation}
see, e.g., \cite{ST87}.
Combined with previous observations, we obtain the following result:

\begin{proposition} \label{prop:random_fourier_series}
	Let~$X = (X^\1,\ldots,X^\d)$ and~$Y = (Y^\1,\ldots,Y^\d)$ be random vectors s.t.
	\begin{equation*}
		X^{\i}(t) = \sum_{k} x_k^{\i} \gamma_k^{\i} e_k(t), \quad
		Y^{\i}(t) = \sum_{k} y_k^{\i} \gamma_k^{\i} e_k(t), \quad
		\i \in \llbracket 1,d \rrbracket,
	\end{equation*}
	where
	\begin{equation*}
		\{\gamma_k^{\i}: k \geq 1, \i \in \llbracket 1,d \rrbracket\}
	\end{equation*}
	is a family of i.i.d. $\cN(0,1)$ random variables, and, for each~$i \in \llbracket 1,d \rrbracket$,  $(x_k^{\i})_{k \geq 1}$, $(y_k^{\i})_{k \geq 1}$ are deterministic sequences such that for some $\alpha > 1/4$, we have
	\begin{equation}
		K := \max_{\i \in \llbracket \1,\d \rrbracket} K^{\i} < \infty, \quad
		K^{\i} := \sup_k \del[1]{\abs[0]{x^{\i}_k} + \abs[0]{y_k^{\i}}} k^{\frac{1}{2}+\alpha}.
		\label{prop:random_fourier_series:coeff_condition}
	\end{equation}
	Then $X$ and $Y$ both admit an $\alpha^-$ H\"older rough path lift and for any $\beta'< \beta \leq\alpha$ with $\beta + \alpha > \frac 1 2$, it holds that% (and $\beta + 3\alpha > 1/4)$,
	\begin{equation} \label{eq:HolderFourier}
		\rho_{\beta'}(X,Y) \leq C  \max_{i \in \llbracket \1,\d \rrbracket} \sup_{k \geq 1}  \del[1]{\abs[0]{x_k^{\i} - y_k^{\i}} k^{\frac{1}{2}+\beta}},
	\end{equation}
	where $C = C(\alpha, \beta, K, \alpha')$.	
\end{proposition}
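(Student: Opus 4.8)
The plan is to deduce everything from Theorem~\ref{thm:cyrXX'}, feeding it the two variation estimates~\eqref{eq:ccyr} and~\eqref{eq:ccyr2}, which I will read off from the coefficient condition~\eqref{prop:random_fourier_series:coeff_condition} via the Besov--variation embedding~\eqref{eq:friz_victoir_embedding} and the Fourier characterisation~\eqref{eq:BesovFourier}. Throughout I restrict to the genuinely rough regime $\alpha \in (1/4,1/2)$: the embedding~\eqref{eq:friz_victoir_embedding} forces $1/2+\alpha < 1$, and for $\alpha \geq 1/2$ the coefficients decay fast enough that the paths have finite $p$-variation for some $p<2$, so that the lift and the estimates follow directly from Young integration. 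I then set $q := (1/2+\alpha)^{-1}$ and $p := (1/2+\beta)^{-1}$, both of which exceed $1$.

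First I would describe the Cameron--Martin spaces. Since $\{\gamma_k^\i\}$ is an i.i.d. family, the components $X^\i$ (and likewise $Y^\i$) are independent, so $X$ and $Y$ each have independent components and $\widetilde{\cH} = \oplus_{\i} \widetilde{\cH}^\i$, where $\widetilde{\cH}^\i$ is the C--M space of $\widetilde{X}^\i := (X^\i,Y^\i)$; it therefore suffices to estimate one component at a time and recombine over the finitely many $\i$. The decisive structural point is that $X^\i$ and $Y^\i$ are built from the \emph{same} Gaussians: writing $\widetilde{X}^\i = \sum_k \gamma_k^\i (x_k^\i e_k, y_k^\i e_k)$, a generic element $\tilde{h} = (h,h') \in \widetilde{\cH}^\i$ has the form $h = \sum_k c_k x_k^\i e_k$ and $h' = \sum_k c_k y_k^\i e_k$ with a \emph{common} coefficient sequence $(c_k)$ and $\norm[0]{\tilde{h}}_{\widetilde{\cH}^\i}^2 = \sum_k c_k^2$; in particular $h - h' = \sum_k c_k (x_k^\i - y_k^\i) e_k$, while the norm of $h$ alone in $\cH^\i$ is again $\sum_k c_k^2$.

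Next I would establish the two quantitative inputs. For~\eqref{eq:ccyr} applied to $X$ (and identically to $Y$), I combine~\eqref{eq:friz_victoir_embedding} at regularity $1/2+\alpha$ with the elementary restriction bound $\norm[0]{h}_{B_2^{1/2+\alpha}([s,t])} \leq \norm[0]{h}_{B_2^{1/2+\alpha}([0,2\pi])}$ (the nonnegative Besov integrand over $[s,t]$ being dominated by the one over $[0,2\pi]$) and the upper bound in~\eqref{eq:BesovFourier}, obtaining $\norm[0]{h}_{B_2^{1/2+\alpha}([0,2\pi])}^2 \lesssim \sum_k c_k^2 (x_k^\i)^2 k^{1+2\alpha} \leq (K^\i)^2 \sum_k c_k^2$ thanks to $\abs[0]{x_k^\i}\,k^{1/2+\alpha} \leq K^\i$. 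This yields $\norm[0]{h}_{q-\var;[s,t]} \leq C K (t-s)^\alpha \norm[0]{h}_{\cH^\i}$, so that Theorem~\ref{thm:cCYR1} already produces the $\alpha^-$ lifts of $X$ and $Y$. Running the identical chain on $h - h'$ at regularity $1/2+\beta$ gives $\norm[0]{h-h'}_{p-\var;[s,t]} \leq C (t-s)^\beta \norm[0]{\tilde{h}}_{\widetilde{\cH}^\i} \sup_k \abs[0]{x_k^\i - y_k^\i}\,k^{1/2+\beta}$, which after recombination over $\i$ is exactly~\eqref{eq:ccyr2} with $\varepsilon := \max_\i \sup_k \abs[0]{x_k^\i - y_k^\i}\,k^{1/2+\beta}$, the right-hand side of~\eqref{eq:HolderFourier}.

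Finally I would verify the admissibility conditions~\eqref{eq:ccyr2:cond} and invoke the theorem. With the above choices, both $1/p+\alpha>1$ and $1/q+\beta>1$ reduce to the standing hypothesis $\alpha+\beta > 1/2$, while $\beta + 3\alpha > 1$ is automatic since $\beta + 3\alpha = (\beta+\alpha) + 2\alpha > 1/2 + 1/2 = 1$ by $\alpha > 1/4$. Theorem~\ref{thm:cyrXX'} then gives $\norm[0]{\rho_{\beta'}[X,Y]}_{L^r(\P)} \leq C\varepsilon$ for every $\beta' < \beta$ and $r \geq 1$, which is~\eqref{eq:HolderFourier}. I expect the only genuinely delicate bookkeeping to be the Cameron--Martin description of the \emph{coupled} process $(X,Y)$: the bound hinges on $X$ and $Y$ being driven by the \emph{same} randomness, so that $h-h'$ inherits the Fourier coefficients $c_k(x_k^\i - y_k^\i)$ under the single norm $\sum_k c_k^2$---an independent coupling would destroy this matching and is precisely what makes the shared-$\gamma_k^\i$ hypothesis essential.
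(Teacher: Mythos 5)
Your proposal is correct and follows essentially the same route as the paper: identify the (joint) Cameron--Martin space explicitly via the shared Gaussians, verify \eqref{eq:ccyr} and \eqref{eq:ccyr2} through the Friz--Victoir embedding \eqref{eq:friz_victoir_embedding} combined with the Fourier characterisation \eqref{eq:BesovFourier}, and conclude with Theorem~\ref{thm:cyrXX'}. Your explicit verification of the admissibility conditions \eqref{eq:ccyr2:cond} and the remark on the regime $\alpha \geq 1/2$ (where the embedding no longer applies) are welcome additions that the paper leaves implicit.
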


\begin{proof}
	Note that~$\cH^X$ and~$\cH^Y$, the respective C--M spaces of $X$ and~$Y$,  admit the direct representation
	\begin{align*}
		\cH^{X^{\i}} 
		& = \cbr[3]{h^{X^\i} =\sum_{k \geq 1} x_k^{\i} u_k^{\i} e_k, \;\: \sum_{k\geq 1} \del[1]{u^{\i}_k}^2 < \infty, \; \: \norm[0]{h^{\i}}_{\cH} = \| u^{\i}\|_{\ell^2}}, \quad
		\cH^X = \bigoplus_{i=1}^{d} \cH^{X^{\i}},
		\\ 
		%%%%%%%%%%%%%%%%%%%%%%%%%%%%%%%%%%%%%%%%%%%%
		\cH^{Y^{\i}} 
		& = \cbr[3]{h^{Y^\i} =\sum_{k \geq 1} y^{\i}_k u^{\i}_k e_k, \;\: \sum_{k\geq 1} \del[1]{u^{\i}_k}^2 < \infty, \; \: \norm[0]{h^{\i}}_{\cH} = \| u^{\i}\|_{\ell^2}}, \quad
		\cH^Y = \bigoplus_{i=1}^{d} \cH^{Y^{\i}}.
	\end{align*}
	In particular, the assumption~\eqref{prop:random_fourier_series:coeff_condition} on the coefficients implies that
	\begin{equation*}
		\cH^X, \cH^Y \subset \cB_2^{\alpha + \frac{1}{2}}([0,2\pi];\R^d)
	\end{equation*}
	which, by Corollary~\ref{coro:pl_approx}, means that $X$ and $Y$ each admit $\alpha^-$ Hölder rough path lifts.
	
	Let~$\cH^{\i} := \cH^{(X^\i, Y^\i)}$. 
	For the distance bound, note that for any element
	\begin{equation*}
		h = (h^X,h^Y) = \del[1]{h^{X^\1}, \ldots, h^{X^\d}, h^{Y^\1}, \ldots, h^{Y^\d}} \in \cH^{(X,Y)},
	\end{equation*}
	by definition of the C--M space we can write
	\begin{equation*}
		h^{\i} 
		:= (h^{X^\i}, h^{Y^\i}) = \sum_{k \geq 1} u_k^{\i} 
		\begin{pmatrix}
			x_k^{\i} \\
			y_k^{\i}
		\end{pmatrix}
		e_k,
		\quad
		\norm[0]{h^{\i}}_{\cH^\i} = \norm[0]{u^\i}_{\ell^2},
		\quad
		\i \in \llbracket 1,d \rrbracket. 
	\end{equation*}
	Combining this observation with~\eqref{eq:BesovFourier} and the Friz--Victoir variation embedding in~\eqref{eq:friz_victoir_embedding}, we arrive at
	\begin{align*}
		& \thinspace
		\norm[0]{h^X - h^Y}_{(\beta+ \frac{1}{2})^{-1}-\var;[s,t]} \\
		\leq & \
		C (t - s)^{\beta} \max_{\i \in \llbracket \1,\d \rrbracket} \norm[0]{h^{X^\i} - h^{Y^\i}}_{B_2^{1/2+\beta}([s,t])} \\
		\lesssim & \
		(t - s)^{\beta} \max_{\i \in \llbracket \1,\d \rrbracket}
		\del[3]{\sum_{k \geq 1} \del[0]{u_k^\i}^2 \del[0]{x_k^\i - y_k^\i}^2 k^{1+2 \beta}}^{\frac{1}{2}} \\
		\lesssim & \
		(t - s)^{\beta} \max_{\i \in \llbracket \1,\d \rrbracket} \sup_{k \geq 1} \abs[1]{\del[0]{x_k^\i - y_k^\i} k^{\frac{1}{2}+\beta}} \norm[0]{u^\i}_{\ell^2} \\
		= & \
		(t - s)^{\beta} \max_{\i \in \llbracket \1,\d \rrbracket} \sup_{k \geq 1} \abs[1]{\del[0]{x_k^\i - y_k^\i} k^{\frac{1}{2}+\beta}} \norm[0]{h^\i}_{\cH^\i} \\
		\leq & \
		(t - s)^{\beta} \max_{\i \in \llbracket \1,\d \rrbracket} \sup_{k \geq 1} \abs[1]{\del[0]{x_k^\i - y_k^\i} k^{\frac{1}{2}+\beta}} \norm[0]{h}_{\cH}
	\end{align*}
	where the RHS is finite by assumption~\eqref{prop:random_fourier_series:coeff_condition}.
	Again, we conclude by Theorem~\ref{thm:cyrXX'}.
\end{proof}

\noindent	
In particular, if $X^N$ is the projection of $X$ onto the Fourier modes that are less than or equal to~$N$, then $X^N \to X$ as~$N \to \infty$ in $\beta$-rough path topology at a rate of almost $N^{\alpha-\beta}$.

Note that a rough path estimate of the form \eqref{eq:HolderFourier} is the best one can get, in the sense that the quantity on the RHS already gives a sharp estimate for $\beta$-H\"older bounds at level~$1$ (see, e.g., \cite{K85}). 
Our result improves previously known results on Fourier series from \cite{FGGR16} which required stronger assumptions on the coefficients $x_k, y_k$. 
This highlights that, in some contexts, C--M regularity may be more easy to check than estimates based on the $2$D variation regularity of the covariance function.

\paragraph*{Acknowledgements.} TK gratefully acknowledges funding via Giuseppe Cannizzaro's EPSRC grant "Large scale universal behaviour of Random Interfaces and Stochastic Operators." This work was largely written while TK was employed at TU Berlin and finished when he was based at the University of Warwick. Both authors thank the Université Paris-Dauphine resp. TU Berlin for their hospitality and financial support during mutual visits. 
We thank the referees for their careful reading of our manuscript and their valuable feedback.

\bibliographystyle{plainalpha}
\bibliography{refs.bib}

\end{document}